\numberwithin{equation}{section}
\newcommand{\C}{\mathbf{C}}
\theoremstyle{definition}
\newtheorem{Remark}{Remark}
\begin{document}
\title[Subdiffusivity of random walk on IPC]{Subdiffusivity of random walk on the 2D invasion percolation cluster}
\date{\today}
\author{Michael Damron}\thanks{M. D. is supported by an NSF postdoctoral fellowship and NSF grant DMS-0901534 and DMS-1007626. email:~mdamron@princeton.edu}
\author{Jack Hanson}\thanks{J. H. is supported by an NSF graduate fellowship and NSF grant PHY-1104596. email:~jthanson@princeton.edu}
\author{Philippe Sosoe}\thanks{P. S. is supported by an NSERC postgraduate fellowship and NSF grant PHY-1104596. email:~psosoe@princeton.edu}
\theoremstyle{plain}
\newtheorem{theorem}{Theorem}
\newtheorem{lemma}{Lemma}

\begin{abstract}
We derive quenched subdiffusive lower bounds for the exit time $\tau(n)$ from a box of size $n$ for the simple random walk on the planar invasion percolation cluster. The first part of the paper is devoted to proving an almost sure analog of H. Kesten's subdiffusivity theorem for the random walk on the incipient infinite cluster and the invasion percolation cluster using ideas of M. Aizenman, A. Burchard and A. Pisztora. The proof combines lower bounds on the instrinsic distance in these graphs and general inequalities for reversible Markov chains. In the second part of the paper, we present a sharpening of Kesten's original argument, leading to an explicit almost sure lower bound for $\tau(n)$ in terms of percolation arm exponents. The methods give $\tau(n) \geq n^{2+\epsilon_0 + \kappa}$, where $\epsilon_0>0$ depends on the instrinsic distance and $\kappa$ can be taken to be $\frac{5}{384}$ on the hexagonal lattice.
\end{abstract}

\maketitle

\section{\large{Introduction}}
H. Kesten has proved \cite{kesten} that the simple random walk $\{X(n)\}_{n\ge 0}$ started at $\mathbf{0}$ on the incipient infinite cluster (IIC) \cite{kesten2} in two-dimensional Bernoulli bond percolation is subdiffusive in the sense that there exists $\epsilon>0$ such that the family
\begin{equation}\label{eq: tightness}
\{n^{-1/2+\epsilon}X(n)\}_{n\ge 0}
\end{equation}
is tight.
The purpose of the current work is to explain how a ``quenched'' version of this result can be obtained and extended to the random walk in an environment generated by a related two-dimensional model, invasion percolation. (The model is defined in the next section).  We present some refinements of Kesten's method, which provides a general framework for proving subdiffusivity of random walks in stochastic geometric models. In the case of two-dimensional invasion percolation (as well as the incipient infinite cluster), the ideas in \cite{kesten} can be used to give explicit bounds on $\epsilon$ from \eqref{eq: tightness} in terms of known critical exponents (see \eqref{eq: onearmprob} and \eqref{eq: twoarmprob} below).

Our main result is the following:
\begin{theorem}\label{main-theorem}
Let $\{X(k)\}_{k\ge 0}$ be a simple random walk on the invasion percolation cluster (IPC), and $\tau(n)$ the first time $X(k)$ exits the box $S(n)=[-n,n]^2$:
\[\tau(n) = \inf\{k\ge 0: |X(k)|_\infty =n\}.\] There exists $\epsilon_0>0$ such that, for almost every realization of the random walk, and almost every realization of the IPC, there is a (random) $n_0$ such that
\[\tau(n) \ge n^{2+\kappa+\epsilon_0} \text{ for } n \geq n_0.\]
$\kappa$ is a constant that can be estimated in terms of the behaviour of the one-arm and two-arm probabilities in critical percolation (with measure $\mathbb{P}_{p_c}$):
\[\kappa \ge \frac{1}{2}\eta_1 \eta_2, \]
where $\eta_1, \eta_2>0$ are exponents such that
\begin{equation}\label{eq: onearmprob}
\mathbb{P}_{p_c}(\mathbf{0} \text{ is connected to } [n,\infty)\times \mathbb{R}) \le \C_1 n^{-\eta_1}
\end{equation}
and
\begin{equation} \label{eq: twoarmprob} \mathbb{P}_{p_c}\left(\begin{gathered} \mathbf{0} \text{ has two disjoint open}\\ \text{ connections to } S(n)^c\end{gathered}\right) \le \C_2 n^{-\eta_2},\end{equation}
for some constants $\C_1$ and $\C_2$. 
\end{theorem}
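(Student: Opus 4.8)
The plan is to reduce the lower bound on $\tau(n)$ to a lower bound on an effective resistance inside the IPC, and then to bound that resistance in two stages: first by a Nash--Williams argument fed with a lower bound on the \emph{intrinsic} distance from $\mathbf 0$ to $\partial S(n)$, which already yields the exponent $2+\epsilon_0$; then by a multiscale sharpening of Kesten's trapping mechanism, which produces the extra exponent $\kappa$. Fix a realization of the IPC, collapse $\partial S(n)\cap\mathrm{IPC}$ to a single vertex $z$, and let $G_z$ be the Green function of the walk killed on hitting $z$. Since $\mathbf 0$ lies on the infinite component, $\tau(n)=T_z$ a.s., and the standard estimates for reversible chains (the identity $\mathbb E_{\mathbf 0}[\tau(n)]=\sum_y G_z(\mathbf 0,y)$ together with $G_z(\mathbf 0,y)=\deg(y)\,\mathbb P_y(T_{\mathbf 0}<T_z)\,R_{\mathrm{eff}}(\mathbf 0,z)$) give
\[
\mathbb E_{\mathbf 0}[\tau(n)]\;\gtrsim\;R_{\mathrm{eff}}(\mathbf 0,\partial S(n))\cdot\big|E\big(\mathrm{IPC}\cap S(n/2)\big)\big|.
\]
A second-moment computation, comparing the invaded region in $S(n/2)$ to critical percolation, shows $\big|E(\mathrm{IPC}\cap S(n/2))\big|\geq n^{2-\eta_1-o(1)}$ with probability tending to one ($2-\eta_1$ being the volume exponent of the IPC), so everything reduces to proving $R_{\mathrm{eff}}(\mathbf 0,\partial S(n))\geq n^{\eta_1+\epsilon_0+\kappa}$ with good probability.

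For the resistance, the first and cruder bound is Nash--Williams with the edge-boundaries of Euclidean balls $S(r)$, $1\le r\le n$, as cutsets: each carries at most $O(r^{1-\eta_1})$ IPC-edges, so $R_{\mathrm{eff}}(\mathbf 0,\partial S(n))\gtrsim n^{\eta_1}$, hence $\mathbb E_{\mathbf 0}[\tau(n)]\gtrsim n^{2}$, the purely diffusive value. To improve this I would replace Euclidean annuli by \emph{intrinsic} annuli, using two geometric facts: every open path in the IPC from $\mathbf 0$ to $\partial S(n)$ has graph-length at least $n^{1+\epsilon_0}$ for some $\epsilon_0>0$ (the intrinsic-distance lower bound mentioned in the introduction, obtained from the arm estimates along the lines of Aizenman--Burchard--Pisztora), and the intrinsic-metric volume of the IPC grows with exponent strictly below $2$. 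Feeding these into Nash--Williams gives $R_{\mathrm{eff}}(\mathbf 0,\partial S(n))\gtrsim n^{\eta_1+\epsilon_0}$ (after relabelling $\epsilon_0$), i.e.\ the almost-sure analog of Kesten's theorem, $\mathbb E_{\mathbf 0}[\tau(n)]\gtrsim n^{2+\epsilon_0}$.

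The extra factor $n^{\kappa}$ is where Kesten's original argument must be sharpened, and this is the technical heart of the matter. Nash--Williams is wasteful on a graph that is essentially a tree at large scales; the true resistance is governed by the abundance of cut edges and of dead-end ``pockets'' --- subclusters attached to the route from $\mathbf 0$ to $\partial S(n)$ at a single vertex and not reaching $\partial S(n)$ --- into which the walk is forced to make long excursions. I would run Kesten's excursion estimate scale by scale over the dyadic scales $2^\ell\le n$. At each scale the count has two parameters: the density along the route of boxes carrying a scale-$2^\ell$ pocket, controlled by the two-arm probability \eqref{eq: twoarmprob} (one arm continuing the route, one arm into the pocket), and the typical extra time such a pocket costs the walk before it can return, controlled by the one-arm probability \eqref{eq: onearmprob}, which governs how deep the pocket reaches before turning back inside $S(n)$. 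Optimizing the resulting sum over scales produces a multiplicative gain of $n^{\kappa}$ over diffusive behaviour with $\kappa\ge\frac12\eta_1\eta_2$; on the hexagonal lattice $\eta_1=\frac5{48}$ and $\eta_2=\frac14$ are admissible in \eqref{eq: onearmprob}--\eqref{eq: twoarmprob}, so $\kappa\ge\frac5{384}$. Combining with the previous step --- both mechanisms must be tracked simultaneously along the same route, which requires care --- yields $\mathbb E_{\mathbf 0}[\tau(n)]\gtrsim n^{2+\epsilon_0+\kappa}$, with a little room to spare in the exponent.

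Finally I would upgrade this to the quenched almost-sure statement. The geometric inputs --- the volume lower bound, the intrinsic-distance lower bound, and the abundance of pockets --- each fail only on an event whose probability decays polynomially in $n$, so Borel--Cantelli along $n=2^k$ fixes a good IPC. For the walk, a hitting time of a reversible chain is spread out, $\mathbb P_{\mathbf 0}(\tau(n)\le t)\le C\,t/\mathbb E_{\mathbf 0}[\tau(n)]$, so taking $t=n^{2+\epsilon_0+\kappa}$, which is $\mathbb E_{\mathbf 0}[\tau(n)]$ divided by a polynomial, makes $\mathbb P_{\mathbf 0}\big(\tau(2^k)\le 2^{k(2+\epsilon_0+\kappa)}\big)$ summable; a second Borel--Cantelli together with the monotonicity of $\tau(n)$ in $n$ then supplies the random $n_0$. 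The main obstacle throughout is the scale-by-scale trapping estimate: besides the combinatorics of counting pockets at each scale exactly once and attached at electrically relevant points, it must contend with the fact that the IPC is not translation invariant, so one works conditionally on the cluster in a box not being reached by the forward invasion --- where it is sandwiched between copies of critical percolation --- and the FKG and BK inequalities do the work.
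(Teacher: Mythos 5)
Your route through $\mathbb{E}_{\mathbf 0}[\tau(n)]\gtrsim R_{\mathrm{eff}}(\mathbf 0,\partial S(n))\cdot|E(\mathrm{IPC}\cap S(n/2))|$ is genuinely different from the paper's, but the final step --- upgrading a bound on the mean to the quenched almost-sure statement --- has a real gap. The inequality $\mathbb{P}_{\mathbf 0}(\tau(n)\le t)\le C\,t/\mathbb{E}_{\mathbf 0}[\tau(n)]$ is false for reversible chains: hang a deep dead-end tree of volume $V$ off $\mathbf 0$ and attach $\mathbf 0$ to the target $z$ by a single edge; then $\mathbb{E}_{\mathbf 0}[T_z]\asymp V$ while $\mathbb{P}_{\mathbf 0}(T_z=1)$ is of order one. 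The correct general bound is $\mathbb{P}_{\mathbf 0}(T_z\le t)\le t/(\deg(\mathbf 0)R_{\mathrm{eff}}(\mathbf 0,z))$ (the number of visits to $\mathbf 0$ before $T_z$ is geometric with mean $\deg(\mathbf 0)R_{\mathrm{eff}}$), and substituting that loses exactly the volume factor your computation was designed to capture: with $R_{\mathrm{eff}}\gtrsim n^{\eta_1+\epsilon_0+\kappa}$ you only get summability of $\mathbb{P}_{\mathbf 0}(\tau(n)\le t)$ for $t\ll n^{\eta_1+\epsilon_0+\kappa}$, far below $n^2$. Controlling the lower tail of $\tau(n)$ --- showing the walk cannot exit quickly even though traps make the mean large --- is the actual difficulty here, and the paper handles it pathwise: the Carne--Varopoulos inequality applied to the induced walk on the backbone, combined with the Aizenman--Burchard--Pisztora lower bound on the intrinsic distance, gives a lower-tail bound on the time $b(n)$ spent on the backbone, and Kesten's local-time continuity lemma (Lemma~\ref{continuity}) then transfers this to the total time. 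A lower bound on $\mathbb{E}_{\mathbf 0}[\tau(n)]$ alone cannot yield Theorem~\ref{main-theorem}.

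The second weak point is the multiscale ``pocket'' count. The paper's mechanism for the gain $n^{\kappa}$ is not a sum over dyadic scales of excursion times into dead ends; it is a local-time comparison at a \emph{single} scale $q\asymp n^{\eta_2/2}/(\log n)^{3/2}$: in any $q$-box the walk visits heavily, the time spent on all IPC sites exceeds the time spent on backbone sites by the ratio of the respective volumes, at least $\pi(q)/\rho(q)\gtrsim 1/\pi(q)\gtrsim q^{\eta_1}\asymp n^{\eta_1\eta_2/2}$, the two-arm exponent entering only through the admissible choice of $q$ (the backbone volume $q^2\rho(q)$ per box must make the lightly visited boxes a negligible fraction of $b(n)$). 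Your scheme of summing pocket contributions over all scales up to $n$ runs into the obstruction the paper flags in Remark 5: the volume and local-time comparisons are only available below the scale $q$, and one cannot push $q$ up to order $n$. As written, your sketch does not explain where $\tfrac12\eta_1\eta_2$ comes from, and the scale-by-scale optimization would require uniform trapping estimates at every scale $2^\ell\le n$ that are not established. I would replace both steps with the backbone/local-time framework, or else supply a genuine lower-tail estimate for $\tau(n)$.
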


\begin{Remark}If one repeats the arguments of this paper in the setting of random walk on the IIC or IPC of the hexagonal lattice, one can use the exact values of the one-arm and two-arm exponents to give a stronger bound on $\kappa$. Indeed, it is not necessary in that case to use the van den Berg-Kesten inequality \cite{vdbkesten} in \eqref{eq: missing_BK}, therefore giving
\[
\kappa \geq (1/2)\eta_2(\eta_2-\eta_1).
\]
Using the conjectured value $\eta_2 = \frac{5}{48} + \frac{1}{4}$ \cite{BN11}, we get a lower bound $\frac{17}{384}$. Without using this value, but using $\eta_2 \geq 1/4$ \cite{BN11}, we get $\kappa \geq \eta_1/8 =  \frac{5}{384}$.
\end{Remark}

\begin{Remark}This result is stronger than the corresponding theorem for the random walk on the IIC stated in \cite[Theorem~1.27]{kesten}, but it is derived by a modification of the strategy used there. In particular, Kesten proves that
\[\mathbb{P}(\tau(n)\ge n^{2+\epsilon})\rightarrow 1,\] 
for the ``averaged'' measure $\mathbb{P}$, which incorporates averaging with respect to the IIC measure constructed in \cite{kesten2}. Closer examination of his proof reveals that one can take $\epsilon = \eta^2_1/4$, and that the estimates in \cite{kesten} are sufficient to establish a ``quenched'' result by a simple application of the Borel-Cantelli lemma. A substantial part of the present paper is concerned with presenting arguments to overcome the difficulties in adapting Kesten's proof to the invasion percolation cluster.
\end{Remark}

The second result of the paper concerns a simple derivation of subdiffusivity of random walk on the IPC using results in \cite{aizburch} and \cite{pisztora} concerning the length of the shortest path from the origin to $\partial S(n)$ (the \emph{chemical distance}) in near-critical percolation. The work of these authors implies that for large $n$ this length is of order at least $n^s$, where $s>1$. Although Theorem~\ref{kesten-ipc-1} is contained in Theorem~\ref{main-theorem}, it is of interest because its proof represents a significant reduction in complexity from the original argument of Kesten.

\begin{theorem}[Quenched Kesten theorem for the IPC]
\label{kesten-ipc-1}
Let $\tau(n)$ be the time for a random walker on the invasion percolation cluster to exit $S(n)$. There exists $\epsilon > 0$ such that, for $\mathbb{P}_{\mathrm{IPC}}$-almost every $\omega$ and almost-every realization of the random walk,
\[\tau(n)\ge n^{2+\epsilon}\]
for $n$ greater than some random $n_0$.
\end{theorem}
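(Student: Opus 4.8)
The plan is to prove Theorem~\ref{kesten-ipc-1} by combining a lower bound on the chemical (intrinsic) distance inside the IPC with a general commute-time estimate for reversible Markov chains. First I would invoke the results of Aizenman--Burchard \cite{aizburch} and Pisztora \cite{pisztora}: inside $S(n)$, the IPC locally looks like near-critical percolation, and the shortest open path from $\mathbf{0}$ to $\partial S(n)$ has length at least $n^s$ for some $s > 1$, almost surely for all large $n$. More precisely, I would extract a statement of the form: there is $s>1$ such that $\mathbb{P}_{\mathrm{IPC}}$-a.s.\ there is a random $n_1$ with $\mathrm{dist}_{\mathrm{IPC}}(\mathbf{0},\partial S(n)) \ge n^s$ for all $n \ge n_1$, where $\mathrm{dist}_{\mathrm{IPC}}$ denotes graph distance within the IPC. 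The passage from ``near-critical percolation'' to ``IPC'' requires some care, since the IPC has a heavy tail of large invaded regions, but the relevant arm-type estimates that control chemical distance are robust under the conditioning; this is where one uses that the IPC restricted to $S(n)$ is dominated, with good probability, by a percolation configuration at parameter close to $p_c$.

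Next I would set up the Markov-chain input. For a simple random walk on a connected graph, reversibility gives the standard bound relating exit times to effective resistance and volume: if the walk starts at $x$ and $\tau$ is the exit time from a set $A$, then $E_x[\tau] \le R_{\mathrm{eff}}(x \leftrightarrow A^c)\cdot (\text{sum of degrees in }A)$, or more useful here, a lower bound via the fact that the walk must traverse a graph-distance of at least $n^s$ to exit $S(n)$. Concretely, I would use that the range of the walk up to time $\tau(n)$ has diameter (in the IPC metric) at least $n^s$, together with a bound on how far a simple random walk travels in the intrinsic metric in a given number of steps. The cleanest route: the expected graph distance reached by the walk after $k$ steps is at most $C\sqrt{k}$ (by a martingale/variance argument using that degrees are bounded by $4$), so to reach intrinsic distance $n^s$ one needs $k \gtrsim n^{2s}$ steps in expectation, hence $\tau(n) \gtrsim n^{2s} = n^{2+\epsilon}$ with $\epsilon = 2(s-1)$.

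To upgrade this from an ``in expectation'' statement to an almost-sure statement valid for all large $n$, I would run a Borel--Cantelli argument along the sequence $n = 2^j$ (or along all integers, using monotonicity of $\tau(n)$ in $n$). For this I need a quantitative tail bound: $P_\omega(\tau(n) \le n^{2+\epsilon}) \le C n^{-\delta}$ for some $\delta>0$ and a.e.\ environment $\omega$, summable along a geometric subsequence. This tail bound follows from a maximal-inequality version of the displacement estimate: the probability that a simple random walk on a bounded-degree graph reaches intrinsic distance $\ell$ within $t$ steps is at most $C t/\ell^2$ (or exponentially small if $t \ll \ell^2$, via a Hoeffding-type bound on the martingale part), so taking $\ell = n^s$ and $t = n^{2+\epsilon}$ with $\epsilon < 2(s-1)$ gives a power saving. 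Combined with the a.s.\ lower bound on $\mathrm{dist}_{\mathrm{IPC}}(\mathbf{0},\partial S(n))$, Borel--Cantelli closes the argument.

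The main obstacle I anticipate is the first step: making rigorous the claim that the chemical-distance lower bounds of \cite{aizburch,pisztora}, which are stated for near-critical or critical percolation, transfer to the IPC with the same exponent $s>1$ and hold simultaneously for all large $n$ almost surely. The IPC is not a product measure, and although its restriction to a fixed box is absolutely continuous with respect to near-critical percolation, the Radon--Nikodym derivative is not uniformly bounded as $n\to\infty$; one must instead exploit the known fact that the IPC inside $S(n)$ is, off an event of polynomially small probability, sandwiched between percolation configurations at parameters $p_c$ and $p_c + n^{-a}$ for suitable $a>0$, and check that the chemical-distance exponent is stable in that near-critical window (this uses the scaling of the correlation length and the arm exponents \eqref{eq: onearmprob}, \eqref{eq: twoarmprob}). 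Everything after that is a robust application of reversibility and Borel--Cantelli.
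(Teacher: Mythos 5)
Your overall architecture is the same as the paper's: a chemical-distance lower bound $\operatorname{dist}_{\Lambda(n)}(\mathbf{0},\partial S(n))\gtrsim n^s$ with $s>1$ (Aizenman--Burchard plus Pisztora's extension to the near-critical window, transferred to the IPC via the correlation-length sandwiching you describe, which is exactly what the paper does with the events $H_n(j)$ and $p_n(1)$), followed by a diffusive upper bound on the walk's displacement in the intrinsic metric, followed by two applications of Borel--Cantelli. The polynomial tail $\lesssim n^{-2}$ on the bad chemical-distance event, summable over all $n$, is what the paper's Lemma~\ref{pisztoraslemma} provides, so your first step and your Borel--Cantelli scheme are sound.

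The one step whose justification would not survive as written is the displacement bound. Neither the position $X(k)\in\mathbb{Z}^2$ nor the intrinsic distance $\operatorname{dist}(X(0),X(k))$ is a martingale for simple random walk on a subgraph of $\mathbb{Z}^2$ (at a degree-one vertex the increment is deterministic), so the ``martingale/variance'' and ``Hoeffding-type bound on the martingale part'' arguments do not apply directly. Moreover, your intermediate claim that on any bounded-degree graph the walk reaches intrinsic distance $\ell$ within $t$ steps with probability at most $Ct/\ell^2$ is false in general: on a regular tree the distance from the root grows linearly in $t$. The correct tool --- and the one the paper uses --- is the Carne--Varopoulos inequality, which requires reversibility of the chain (hence the paper's restriction to the modified walk $X^n$ on $\Lambda(n)$, which is reversible) and gives $\mathbf{P}^\omega(X^n(k)=y)\le 2\sqrt{\mu(y)/\mu(\mathbf{0})}\exp(-\operatorname{dist}(\mathbf{0},y)^2/2k)$; one then sums over the at most $O(k^2)$ vertices available because the graph sits inside $\mathbb{Z}^2$. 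It is this polynomial volume growth, not bounded degree alone, that converts the pointwise Gaussian bound into the tail estimate $\lesssim k^2\exp(-\lambda^2/C)$ needed for your Borel--Cantelli step. With that substitution your argument closes and coincides with the paper's proof.
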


\begin{Remark}A similar, but simpler, argument applies to the incipient infinite cluster and gives an alternative proof that the random walk on the IIC is almost surely subdiffusive. See the Appendix for details.
\end{Remark}

\begin{Remark}$\epsilon>0$ in the statement of Theorem \ref{kesten-ipc-1} depends on the value of $s$ obtained by the methods of Aizenman-Burchard and Pisztora. $s$ is both very small and difficult to calculate explicitly. Kesten's comparison argument (explained in Section~\ref{section-comparison}) yields an improvement of the estimate for $\tau(n)$ in the previous theorem by a factor of the form $n^{\kappa}$, which leads to Theorem \ref{main-theorem}. We note that any explicit bound on $s$ would be directly reflected in that theorem. Indeed, if one has upper and lower bounds (with high enough probability)
\[
Cn^{s_1} \leq \text{dist}_{\text{IPC}}(x,y) \leq Cn^{s_2},~ x,y \in \text{IPC}
\]
then one can get the lower bound $\tau(n) \geq Cn^{a}$ for any $a$ satisfying
\[
a < 2s_1 + \eta_1\left(2 \frac{s_1}{s_2} - \frac{2-\eta_2}{s_2}\right).
\]
On the hexagonal lattice, this can be improved as above to
\[
a < 2s_1 + (\eta_2-\eta_1)\left(2 \frac{s_1}{s_2} - \frac{2-\eta_2}{s_2}\right).
\]
One can actually show $s_2$ can be taken to be $2-\eta_2$, which yields the improved bound (assuming again the exact value of $\eta_2$)
\[
\kappa \geq \frac{\eta_2(\eta_2-\eta_1)}{2-\eta_2} = \frac{17}{316}.
\]
\end{Remark}

\begin{Remark}The improvement due to $s_1$ and $s_2$ in the previous remark comes from choosing $q$ larger in \eqref{eq: qdef}. It is actually a common misconception that Kesten's original ``lost in the bushes'' argument gives a lower bound for $\tau(n)$ proportional to the ratio of volume of the IIC to the volume of its backbone. The reason this is false is that it is not clear how to increase $q$ to order $n$. The parameter $q$ gives the scale at which volume estimates can be applied.
\end{Remark}

There has been little success with rigorous results for random walks on low-dimensional critical models (for instance, the IIC and IPC). One notable example is the work of D. Shiraishi \cite{S12} on random walk on non-intersecting two-sided random walk trace. For results in high dimensions, we mention the recent work of G. Kozma and A. Nachmias \cite{KN} on the IIC in dimensions $d \geq 19$ and of M. Barlow, A. J\'arai, T. Kumagai and G. Slade on the IIC for oriented percolation \cite{BJKS}. On a critical Galton-Watson tree, Kesten \cite{kesten} found the asymptotics of $\tau(n)$ and constructed a scaling limit for random walk on the IIC (see also \cite{CK} and \cite{BK}). Later, O. Angel, J. Goodman, F. den Hollander and G. Slade \cite{AGHS} found similar results for random walk on the IPC on a regular tree.

After setting some notation below, we give the definition of the invasion percolation model in Section \ref{section-invasion}, and recall some useful properties of the IPC derived in previous literature. We then prove Theorem \ref{kesten-ipc-1} in Section \ref{section-backbone}, and explain how Kesten's volume comparison argument is used to obtain Theorem \ref{main-theorem} in Section \ref{section-comparison}. Section \ref{section-volume} contains the derivation of estimates used in the proof of Theorem 1.

For convenience, we work on the square lattice $\mathbb{Z}^2$, but our results extend to planar lattices for which the Russo-Seymour-Welsh estimates hold true.

\subsection{Notation}
In this section, we give notation used throughout the paper for future reference. For any vertex (lattice point) $v =(v_1,v_2)\in \mathbb{Z}^2$, $S(n,v)$ is the box
\begin{align*}
S(n,v) &= ([v_1-n,v_1+n]\times[v_2-n,v_2+n])\cap \mathbb{Z}^2\\
&= \{x\in \mathbb{Z}^2: |x-v|_\infty \le n\}.\\
|x|_\infty &= \max (|x_1|,|x_2|).
\end{align*}
$S(n)$ is the box $S(n,\mathbf{0})$, centred at the origin. $\partial S(n,v)$ refers to the internal vertex boundary of $S(n,v)$:
\[\partial S(n,v) =\{x \in S(n) : \exists y\in S(n)^c, |x-y|_\infty = 1\}\]
We also define
\[\Lambda(n)=S(n)\cap \operatorname{IPC},\]
where IPC is defined in the next section. For a graph $G$, the set of edges is denoted by $E(G)$.

For each $p\in [0,1]$, the independent bond percolation measure $\mathbb{P}_p$ is an infinite product of Bernoulli measures with parameter $p$ indexed by the edges of $\mathbb{Z}^2$. For a finite set $I\subset E(\mathbb{Z}^2)$, and a vector $v\in \{0,1\}^I$ we have
\[\mathbb{P}_p(\sigma \in \{0,1\}^{E(\mathbb{Z}^2)}: \sigma(e) = v(e) \text{ for } e\in I) = p^{\sharp\{e: v(e) = 1\}}(1-p)^{\sharp\{e: v(e) = 0\}}.\]
A \emph{configuration} $\sigma$ is an element of $\{0,1\}^{E(\mathbb{Z}^2)}$. An edge $e$ is said to be \emph{open} in the configuration $\sigma$ if $\sigma(e)=1$, and \emph{closed} otherwise.

If $A$ and $B$ are subsets of $\mathbb{Z}^2$ we denote by
\[\mathbb{P}_p(A\rightarrow B)\]
the probability of the event that $A$ and $B$ are connected by a path of open edges. The notation $\mathbb{P}_{\mathrm{IPC}}(A\rightarrow B)$  is defined analogously.  We write
\[\mathbb{P}(A\xrightarrow{p} B)\]
to denote the probability that $A$ and $B$ are connected by $p$-open edges.

We will use the connection probabilities $\pi$ and $\rho$ defined as
\begin{align*}
\pi(p,n)&=\mathbb{P}_{p}(\mathbf{0}\to [n,\infty)\times \mathbb{R})\\
\rho(p,n)&=\mathbb{P}_{p}(\mathbf{0} \to \partial S(n) \text{ by two disjoint open paths}).
\end{align*}
These probabilities refer to independent bond percolation with parameter $p$. When no parameter is specified, it is understood that $p=p_c=1/2$; that is,
\[\pi(n)= \pi(p_c,n), \ \rho(n)= \rho(p_c,n).\]

We denote by $\mathbb{P}_{\mathrm{IPC}}$ the invasion percolation measure on bond configurations in $\mathbb{Z}^2$. Throughout, $\omega$ will denote a realization of the IPC; that is, a subgraph of $\mathbb{Z}^2$ sampled from $\mathbb{P}_{\mathrm{IPC}}$.
For each such $\omega$, we denote by $\mathbf{P}^\omega$ the probability measure associated with the simple random walk on the invasion cluster in the realization $\omega$ (which by definition contains the origin $\mathbf{0}=(0,0)\in \mathbb{Z}^2$). 

For $x>0$ , we denote by $\log x=\log_2 x$ the logarithm of $x$ in base 2.

Throughout the paper, $\C_i$ will denote constants chosen independent of $n$. We use the notation $A \lesssim B$ if there exists a constant $C$ such that 
\[A \le C B.\] This notation is only used if the implicit constant $C$ is deterministic; that is, it does not depend on the realization of the IPC or of the random walk. The notation
\[A\lesssim_c B\]
is used to emphasize that the implicit constant depends on the parameter $c$.

The notation $A \asymp B$ denotes the existence of two positive constants $D_1$ and $D_2$ such that
\[D_1B \le A \le D_2 B.\]

If $f(n)$ and $g(n)$ are two positive sequences, we use the notation $f(n) \gg g(n)$ to mean 
\[\lim_{n\rightarrow \infty} \frac{g(n)}{f(n)} = 0.\]

\section{\large{Invasion percolation}}
\label{section-invasion}
\subsection{Definition of the model}
The planar invasion percolation cluster is a random subgraph of the lattice $\mathbb{Z}^2$ which can be constructed from the familiar coupling of the independent bond percolation measures $\mathbb{P}_p$, $0<p<1$. To every edge $e$ of $\mathbb{Z}^2$, viewed as a graph, associate a random variable $w(e)$, uniformly distributed in $[0,1]$, $\{w(e) : e \in E\}$ and $\{w(e') : e' \in E'\}$ being independent for $E\cap E' = \emptyset$. An edge is called $p$-\emph{open} if $w(e)\le p$ and is $p$-\emph{closed} otherwise. The distribution of the set of $p$-open edges is that of a Bernoulli bond-percolation process at density $p$. The distribution of $(w(e))_{e\in E(\mathbb{Z}^2)}$ is a product measure which will be denoted by $\mathbb{P}$.

The IPC consists of a union of subgraphs of $\mathbb{Z}^2$ constructed by an iterative process: we start at the origin $\{0, \emptyset \}\equiv G_0$. At every stage, we form $G_{i+1}$ by adding to the current (finite) graph $G_i$ the edge $e$ with the least weight $w(e)$ among
\[\Delta G_i =\{e=(u,v)\in E(\mathbb{Z}^2), e\notin E(G_i) \text{ but } u\in G_i\}\]
as well as the endpoints of $e$. The IPC is defined to be the union $\bigcup_{i\ge 0} G_i$.

Since the percolation probability $\theta(p) = \mathbb{P}_p(\mathbf{0} \to \infty)$ at $p_c=1/2$ is zero \cite{kesten4}, the IPC contains infinitely many edges $e$ with $w(e)>1/2$. On the other hand, for any $p>p_c$, by the Russo-Seymour-Welsh theorem, the IPC will intersect the (unique) $p$-open infinite cluster almost surely (see \cite{2chayesnewman} for general $d$).  By construction, once an edge $e$ in the $p$-open infinite cluster has been added, all edges added to the IPC after $e$ have weight no bigger than $p$.

\subsection{Correlation length}
We will later require bounds for the probability that the IPC intersects the $p$-open infinite cluster, for some fixed $p$, by the time it reaches an annulus of size $n$. Such estimates can be found in \cite{zhang}, \cite{jarai}. An important notion in this context is the \emph{finite-size scaling length} $L(p,\epsilon)$. To define it, consider for $p>p_c$ the probability
\[\sigma(n,m,p)=\mathbb{P}_p(\exists \text{ a } p\text{-open horizontal crossing of } [0,n]\times[0,m]).\]
Then $L(p,\epsilon)$ is defined to be
\[L(p,\epsilon)=\min_{n\ge 0}\{\sigma(n,n,p)\ge 1-\epsilon\}.\]
From \cite{kesten3}, it is known that $L(p,\epsilon_1)\asymp L(p,\epsilon_2)$ for $0<\epsilon_1,\epsilon_2 \le \epsilon_0$, so we shall fix $\epsilon =\epsilon_0$ and henceforth simply refer to $L(p)=L(p,\epsilon)$. We note the following properties of $L$:
\begin{enumerate}

\item $L(p)$ is right-continuous, non-increasing in $(p_c,1)$ and $L(p)\rightarrow \infty$ as $p\downarrow p_c$.
\item Taking $\epsilon_0$ small enough, there exists $K>0$ such that \cite[(2.8)]{jarai}:
\[\sigma(2mL(p),mL(p),p)\ge 1 - \exp(-Km),~ m \geq 1.\]
\item Again from \cite[Eq.~2.10]{jarai}, there exists $D$ independent of $p$ such that
\[\lim_{\delta\downarrow 0} \frac{L(p-\delta)}{L(p)} \le D.\]

\end{enumerate}

Let $\log^{(j)}$ be the $j$-th iterate of $\log$, and
\[\log^*n =\min \{j>0 : \log^{(j)}n \le 16\}, \ n\ge 16.\]
Define, for $n\ge 16$ and $j=1,\ldots,\log^* n$
\[p_n(j) = \min_{p>p_c} \left\{ L(p) \le \frac{n}{M\log^{(j)}n}\right\}.\]
$M>0$ is a constant to be determined later.
Note that if $m\le n$, then $p_n(1)\ge p_m(1)$ when $m$ is sufficiently large.

By (3) above, there exists $D>0$ such that
\begin{equation}
\label{eq:l_p}
M\log^{(j)}n \le \frac{n}{L(p_n(j))}\le DM\log^{(j)}n. \end{equation}

Item (2) in the list above implies \cite[(2.21)]{jarai}
\begin{align}
\label{eq:jarai-bound}
\mathbb{P}(H_n(j)) &\equiv \mathbb{P}\left(\begin{gathered} \exists\  p_n(j)\text{-open circuit } \mathcal{C} \text{ around } 0 \text{ in } S(n/2)\setminus S(n/4) \\\text{ and }\mathcal{C} \text{ is connected to } \infty \text{ by a }p_n(j)\text{-open path}\end{gathered} \right)\\
& \ge 1- \C_3 \exp(-\C_0M\log^{(j)}n).\nonumber
\end{align}
The measure $\mathbb{P}$ refers to the coupling of the $p$-Bernoulli measures described earlier. If the event $H_n(j)$ occurs, the IPC intersects the $p_n(j)$-open infinite cluster by the time it reaches $S(n)^c$. The bound (\ref{eq:jarai-bound}) plays a role in estimates derived in Section \ref{section-volume}.

\section{\large{Proof of Theorem~\ref{kesten-ipc-1}}}
\label{section-backbone}

We begin by giving a brief sketch of the main idea. The first step is to consider a restriction of the random walk to a certain subset of the IPC, the backbone. The exit time for this walk from a box of size $n$ is controlled using the Varopolous-Carne inequality. This inequality implies that the exit time is at least of order $d^2$, where $d$ is the chemical (instrinsic) distance to the boundary of the box of size $n$ through the IPC. In Lemma~\ref{pisztoraslemma}, we outline an argument of A. Pisztora that proves that $d$ grows superlinearly with $n$. All of these estimates are tight enough to apply Borel-Cantelli and close the proof of subdiffusivity. 

\subsection{Random walk on the backbone}
The simple random walk started at $\mathbf{0}$ on the IPC is the Markov chain $\{X(k)\}_{k\ge 0}$ with the set of sites in the IPC as its state space, such that $X(0) =\mathbf{0}$, and with transition probabilities given by
\begin{equation*}
\mathbf{P}^\omega(X(k+1) =y \mid X(k)=x) = \frac{\mathbf{1}[(x,y) \in E(\mathrm{IPC})]}{\operatorname{deg}(x,\mathrm{IPC})}.
\end{equation*}
The random variable $\operatorname{deg}(x,\mathrm{IPC})$ denotes the number of sites $y$ such that the edge $(x,y)$ belongs to the $\mathrm{IPC}$.

Below, it will be convenient to work with a modification of $X$ that is reversible on $\Lambda(n)$. Thus, we let $\{X^n(k)\}_{k\ge 0}$ be the Markov chain started at the origin and defined by the transition probabilities
\[\mathbf{P}^\omega(X^n(k+1) =y \mid X^n(k)=x) = \frac{\mathbf{1}[(x,y) \in E(\Lambda(n))]}{\operatorname{deg}(x,\Lambda(n))}.\]
Note that the distribution of $X^n(k)$ coincides with that of $X(k)$ for $k\le \mathcal{\tau^*}(n)$, where
\[\tau^*(n) = \inf \{k\ge 0: |X^n(k)|_\infty=n\}.\]
Moreover, the distribution of $\tau^*(n)$ is equal to the distribution of the exit time $\tau(n)=\inf\{k \geq 0:|X(k)|_\infty =n\}$ defined in terms of the ``full'' random walk $X$ on $\Lambda(n)$. Thus, it will suffice to obtain bounds on $\tau^*(n)$.

The ``backbone'' $B(n)$ of $\Lambda(n)$ is the set of sites in $\Lambda(n)$ connected in the invasion cluster to $\mathbf{0}$ and to $\partial S(n)$ by two disjoint paths. A simple argument (see \cite[Lemma~3.13]{kesten}) shows that whenever $X^n$ leaves the backbone, it must return at the site where it left before it reaches $\partial{S}(n)$. Thus the random walk $X^n$ on $\Lambda(n)$ induces a random walk $X^{n,B}$ on $B(n)$ which moves only when $X^n$ is in $B(n)$. That is, if we define
\begin{align*}
\sigma_0 &= 0\\
\sigma_m &= \inf \{k > \sigma_{m-1}: X^n(k) \in B(n)\}\\
X^{n,B}(k)&\equiv X^n(\sigma_k), 
\end{align*}
then $X^{n,B}$ is a random walk on the backbone $B(n)$, with transition probabilities given by
\begin{equation}
\mathbf{P}^\omega(X^{n,B}(k+1)=y\mid \,X^{n,B}(k)=x)=\begin{cases}
\frac{\mathbf{1}[x,y\in B(n), (x,y)\in E(\mathbb{Z}^2))]}{\operatorname{deg}(x,\Lambda(n))},&  y\neq x\\
\frac{\operatorname{deg}(x,\Lambda(n)) -\operatorname{deg}(x,B(n))}{\operatorname{deg}(x,\Lambda(n))}, & x=y.
\end{cases}
\end{equation}
Here, $\text{deg}(x,B(n))$ is defined as the number of edges $(x,y)$ in $\Lambda(n)$ such that $x,y \in B(n)$.

\subsection{Estimate on the speed of the walk}

Irrespective of the geometry of  $B(n)$, $X^{n,B}$ must travel at least $n$ steps in $B(n)$ to reach $\partial{S}(n)$, because the distance between any two points in $B(n)\subset \mathbb{Z}^2$ is no less than the corresponding chemical distance in $\mathbb{Z}^2$. This fact was used by Kesten to conclude that the time spent by the walker on the backbone is of order at least $n^2/\log n$ with high probability.  The Carne-Varopoulos bound (\cite{carne}, \cite{varopoulos}; see also \cite[Theorem~13.4]{pereslyons}) allows us to obtain a better estimate by considering the chemical distance on $B(n)$. It implies that the reversible Markov chain $X^{n,B}$ has at most diffusive speed in the intrinsic metric of the backbone. If $\mu$ is the stationary measure for the walk $X^{n,B}$ ($\mu$ depends on $\omega$), then
\[\mathbf{P}^\omega(X^{n,B}(k)=y\,\mid X^{n,B}(0)=\mathbf{0})\le 2\sqrt{\mu(y)/\mu(0)} \exp(-\operatorname{dist}_{B(n)}(\mathbf{0},y)^2/(2k)).\]
The right side of this expression refers to the chemical distance in the backbone $B(n)$. The ratio appearing on the right can be bounded independently of the realization $\omega$ of the invasion percolation, since the stationary measure $\mu$ satisfies
\[ 1/4 \le\frac{\mu(x)}{\mu(y)}\le 4,\]
for any $x,y\in B(n)$.
Since $B(n)\subset \mathbb{Z}^2$, we have the inequality of graph distances:
\[\operatorname{dist}_{B(n)} \ge \operatorname{dist}_{\mathbb{Z}^2} = \operatorname{dist}_1.\]
Summing this bound over $\lambda \sqrt k\le|y|_\infty \le k$, we find
\begin{align*}
\mathbf{P}^\omega(\operatorname{dist}_{B(n)}(\mathbf{0},X^{n,B}(k))\ge \lambda\sqrt{k} )\lesssim k^2\exp(-\lambda^2/\C_4).
\end{align*}
Suppose we restrict our attention to realizations $\omega$ of the environment such that the chemical distance in $B(n)$ satisfies
\[\operatorname{dist}_{B(n)}(\mathbf{0},\partial S(k)) \ge \C_5 k^s, \quad  k \ge n_0(\omega)\]
for  some $n_0(\omega)$ and some deterministic constants $s>1$, $\C_5>0$. For such $\omega$, $\lambda\ge 1$ and $n$ sufficiently large, we have:
\begin{align}
\label{eq-strong-cv}
\mathbf{P}^\omega(|X^{n,B}(k)|_\infty \ge \lambda k^{1/(2s)} ) &\leq \mathbf{P}^\omega(\operatorname{dist}_{B(n)}(X^{n,B}(k),\mathbf{0}) \ge \C_5 \lambda k^{1/2}) \\
&\lesssim k^2\exp(-\lambda^2/\C_6). \nonumber
\end{align}

\subsection{Chemical distance in the IPC}
It follows from work of Aizenman and Burchard \cite{aizburch} that the chemical distance inside a large box in independent bond percolation with parameter $p_c=1/2$ is bounded below by a power $s>1$ of the Euclidean distance in $\mathbb{Z}^2$ with high probability. Pisztora \cite{pisztora} showed how to extend this result to $p>p_c$ suitably close to $1/2$, and to the invasion percolation cluster. We reproduce the argument leading to his result, in a form that suits our needs, in the lemma below. Theorem~\ref{kesten-ipc-1} follows from these results and the considerations above.

\begin{lemma}[\cite{pisztora},~Theorem~1.3]
\label{pisztoraslemma}
There exist $\C_7$ and $s>1$ such that
\begin{equation}\label{eq:invasion-chemical}
\mathbb{P}_{\mathrm{IPC}}\left(\operatorname{dist}_{\Lambda(n)}(\mathbf{0},\partial S(n))\le \C_7 n^{s} \right) \lesssim n^{-2}.
\end{equation}
\begin{proof}
The models considered in \cite{aizburch} are defined by families $\{\mathbf{P}_\ell\}_{\ell>0}$ of probability measures on collections of curves in a compact region $\mathcal{R}$. For each $\ell$, $\mathbf{P}_\ell$ is supported on unions of polygonal curves with step size $\ell$. The realizations in the support of $\mathbf{P}_\ell$ are denoted by $\mathcal{F}_\ell$.

A truncated version of capacity is used to obtain lower bounds on the minimal number $N(A, \ell)$ of sets of diameter $\ell$ required to cover a given set $A\subset \mathcal{R}$:
\begin{equation}
\label{eq:lowerbd}
N(A, \ell) \ge \operatorname{cap}_{s,\ell}(A)\cdot \ell^{-s}
\end{equation}
where 
\[\operatorname{cap}^{-1}_{s,\ell}(A) = \inf_{\mu(A)=1}\iint \frac{1}{\max(|x-y|^s , \ell^{-s})}\ \mu(\mathrm{d}x)\mu(\mathrm{d}y).\]
The infimum is over Borel probability measures supported on $A$.

Under the assumption  ``Hypothesis H2,'' the authors of \cite{aizburch} obtain uniform bounds for $\operatorname{cap}_{s,\ell}$: if there exist some $K, \sigma >0$,  and $0<\rho<1$ such that for every $k$ and collection of $k$ rectangles $A_1,\ldots, A_k$ of lengths $l_1,\ldots,l_k \ge \ell$ and cross-section\footnote{The cross-section of a rectangle is the ratio of its short side to its long side.} $\sigma$, and satisfying
\[\operatorname{dist}(A_j,\cup_{i\neq j} A_i) \ge \operatorname{diam}A_j\]
for all $j$, we have
\[\mathbf{P}_\ell(\text{all } A_i \text{ are traversed by segments of a curve in } \mathcal{F}_\ell)\le K\rho^k,\]
then the capacity $\operatorname{cap}_{s,\ell}$ of macroscopic curves is bounded below for some $s>1$ \cite[Theorem~1.3]{aizburch}: all curves $\mathcal{C}$ in $\mathcal{F}_\ell$ with $\operatorname{diam}(\mathcal{C})\ge 1/10$ satisfy
\[\operatorname{cap}_{s,\ell}(\mathcal{C})\ge C(s,\omega,\ell).\]
$C(s,\omega,\ell)$ is a random variable which is \emph{stochastically bounded below} in the sense that 
\begin{equation}\label{eq:straight-runs}
\mathbf{P}_\ell(C(s,\omega,\ell)\le u)\rightarrow 0
\end{equation}
uniformly in $\ell$ as $u\rightarrow 0$.

We will apply the results in \cite{aizburch}, with $\ell = n^{-1}$ to bond percolation on the rescaled lattice 
\[\mathcal{R}_n = (1/n)\mathbb{Z}^2\cap [-1,1]^2.\] 
For $p\in [0,1]$, let $\mathbb{P}^n_p$ denote the independent bond percolation measure with parameter $p$ on the edges of $\mathcal{R}_n$. $\mathbb{P}^n_p$ induces a probability measure on configurations $\mathcal{F}_{1/n}$ of curves in $\mathcal{R} =[-1,1]^2$: the percolation configuration is a union of connected paths of $p$-open edges, each edge being identified with a line segment of length $1/n$.

In the case of independent percolation, Hypothesis H2 reduces to the existence of a cross-section $\sigma$ and $\rho<1$ such that the probability that there exists an open-crossing of a rectangle of cross-section $\sigma$ is less than $\rho$. By the Russo-Seymour-Welsh estimates, Hypothesis H2 is satisfied for $\{\mathbb{P}^n_{p_c}\}_{n\ge 1}$. 

The lower bound (\ref{eq:lowerbd}) gives an estimate for the chemical distance in $\mathcal{F}_{1/n}$ between any two sets in $[-1,1]^2$. Any $p_c$-open path in $\mathcal{R}_n$ connecting subsets $A$ and $B$ of $\mathcal{R}_n$ at Euclidean distance
\[\operatorname{dist}(A,B)\ge 1/10\]
contains at least $C(s,\omega,1/n) \cdot n^s$ bonds. Denote by $\operatorname{dist}_{\mathcal{F}_{1/n}}(A,B)$ the (random) number of bonds in the shortest $p_c$-open path connecting $A$ and $B$ in $\mathcal{R}_n$.  By (\ref{eq:straight-runs}), given any $\epsilon>0$, we can choose $C(\epsilon)$ such that for all $n$,
\[\mathbb{P}^n_{p_c}(\operatorname{dist}_{\mathcal{F}_{1/n}}(A,B) \le C(\epsilon)\cdot n^s)\le \epsilon.\]
The scaling 
\[x\mapsto nx\] defines a measure-preserving bijection between $(E(\mathcal{R}_n),\mathbb{P}^n_{p_c})$ and $(E(S(n)), \mathbb{P}_{p_c})$. It follows that for each $\epsilon>0$, there exists a constant $C(\epsilon)$ such that for all subsets $A$, $B$ of $S(n)$ at Euclidean distance $n/10$ from each other, 
\[\mathbb{P}_{p_c} \left(\begin{gathered} \text{there exists an open path connecting } A \text{ and } B \\\text{ in } S(n)
\text{ with no more than } C(\epsilon) n^s \text{ bonds} \end{gathered}\right)\le \epsilon.\]
Note that if $B$ cuts $A$ from $S(n)^c$ in $\mathbb{Z}^2$, the restriction that the path be contained in $S(n)$ is superfluous. This point will be relevant below.

The observation in \cite{pisztora} is that the Aizenman-Burchard bounds remain valid for $p>p_c$ as long as $n$ is smaller than the correlation length $L(p)$. The estimate used to obtain (\ref{eq:straight-runs}) depends only on $\sigma$ and $\rho$ \cite[p.~446]{aizburch}. It follows from the definition of $L(p)$ and the Russo-Seymour-Welsh estimates that there exists $\rho<1$ such that  for rectangles of cross-section ratio $1/3$, say, with long side $n\le 3L(p)$,
\[\mathbb{P}_p(\exists \text{ an open crossing of } [0,n]\times[0,n/3])\le \rho.\]

Thus (\ref{eq:straight-runs}) remains true uniformly for $\ell^{-1}\le 3L(p)$. Repeating the argument above, we see that we can choose $C(\epsilon)$ independent of $p\in(p_c,1)$ to make the probability
\[\mathbb{P}_{p}(\operatorname{dist}(\partial S(L(p)),\partial S(3L(p)) \le C(\epsilon) L(p)^s)\]  
smaller than an arbitrary $\epsilon>0$.
The distance refers to the chemical distance in the union of all percolation clusters in the box $S(3L(p))$. Since $L(p)\rightarrow \infty$ as $p\downarrow p_c$, for any fixed $\epsilon$, $L(p)$ is much greater than $C(\epsilon)$, and so the estimate on the distance is not vacuous. More precisely, we find
\begin{equation}
\label{eq:c-epsilon}
\limsup_{p\downarrow p_c}\mathbb{P}_{p}(\operatorname{dist}(\partial S(L(p)),\partial S(3L(p)) \le C(\epsilon) L(p)^s) \le \epsilon.
\end{equation}

A block argument with blocks of size $3L(p)$ converts the initial estimate (\ref{eq:c-epsilon}) into an exponential bound for the macroscopic chemical distance in near-critical percolation (see the proof of \cite[Theorem~1.3,~pp.~12-14]{pisztora}). There exist constants $\C_8$, $\C_9$ such that if $p$ is sufficiently close to $p_c$:
\begin{equation}
\label{eq:renormalized-ab}
\mathbb{P}_p(\operatorname{dist}(\mathbf{0},\partial S(n))\le \C_8 n(L(p))^{s-1} )\lesssim \exp\left(-\C_9 \frac{n}{L(p)}\right).
\end{equation}
With this in hand, (\ref{eq:invasion-chemical}) follows from the construction described in Section \ref{section-invasion}. We outline the argument. The occurrence of the event 
\[\tilde H_n(1) = \left\{ \begin{gathered} \exists\  p_{(n/4)-1}(1)\text{-open circuit } \mathcal{C} \text{ around } \mathbf{0} \text{ in } S(n/2)\setminus S(n/4)\\ \text{ and }\mathcal{C} \text{ is connected to } \infty \text{ by a }p_{(n/4)-1}(1)\text{-open path}\end{gathered}\right\}\] implies that all edges of the IPC in $\Lambda(n) \setminus S(n/2)$ are $p_{(n/4)-1}(1)$-open.
\begin{align}
\label{eq: finaldistance}
&\mathbb{P}_{\mathrm{IPC}}(\operatorname{dist}_{\Lambda(n)}(\mathbf{0},\partial S(n))
\le (\C_8/5) \cdot n(L(p_{(n/4)-1}(1)))^{s-1})\\
\le& \ \mathbb{P}\left(\begin{gathered} \exists\  x \in \partial S(3n/4):  \operatorname{dist}_{\Lambda(n)}(x,\partial S(x,n/4-1)) \\ \le (\C_8/5)\cdot n(L(p_n(1))  )^{s-1};  \tilde H_n(1) \end{gathered}\right) +\mathbb{P}(\tilde H_n(1)^c) \nonumber
\\ 
\lesssim& \  n\mathbb{P}_{p_{(n/4)-1}(1)}(\operatorname{dist}(\mathbf{0},\partial S(n/4-1))\le (\C_8/5) \cdot n(L(p_{(n/4)-1}(1)))^{s-1} ) +n^{-M\C_{10}} \nonumber \\
\lesssim& \  n\exp\left(-\C_{11} \frac{n}{L(p_{(n/4)-1}(1))}\right) + n^{-M\C_{10}}. \nonumber
\end{align}
The final inequality follows from (\ref{eq:renormalized-ab}) and (\ref{eq:jarai-bound}). Recalling (\ref{eq:l_p}):
\[\frac{n}{L(p_n(1))}\ge M\log n,\]
and choosing $M$ suitably large in the definition of $p_n(1)$, we find:
\[ n\exp\left(-\C_{11} \frac{n}{L(p_{(n/4)-1}(1))}\right) + n^{-M\C_{10}} \lesssim n^{-2}.\]
By slightly lowering $s$ to absorb the logarithm, the probability on the left of (\ref{eq: finaldistance}) can be made to match the form of the left side of (\ref{eq:invasion-chemical}).
\end{proof}
\end{lemma}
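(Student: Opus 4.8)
The plan is to recover the lemma by importing the Aizenman--Burchard lower bound on chemical distance \cite{aizburch} in three stages: (i) establish it at criticality on a rescaled lattice; (ii) extend it to Bernoulli percolation at densities $p>p_c$ slightly supercritical, with constants that are \emph{uniform in $p$} but with the conclusion valid only up to the finite-size scaling length $L(p)$; and (iii) transfer the near-critical estimate to the IPC using the coupling of the measures $\{\mathbb{P}_p\}$ together with J\'arai's bound \eqref{eq:jarai-bound}.

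For stage (i), I would rescale, working on $\mathcal{R}_n=(1/n)\mathbb{Z}^2\cap[-1,1]^2$ with mesh $\ell=1/n$ and identifying a $p_c$-open configuration with a union of polygonal curves of step size $\ell$. The input from \cite{aizburch} is that under ``Hypothesis H2'' --- a uniform-in-scale estimate $\mathbf{P}_\ell(\text{all of }k\text{ well-separated thin rectangles are traversed})\le K\rho^k$ --- the truncated $s$-capacity of every macroscopic curve is stochastically bounded below uniformly in $\ell$, cf. \eqref{eq:straight-runs}, and via \eqref{eq:lowerbd} this forces each such curve to use at least $C(s,\omega,\ell)\,\ell^{-s}$ bonds for some $s>1$. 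The Russo--Seymour--Welsh estimates supply Hypothesis H2 for $\{\mathbb{P}^n_{p_c}\}$, with constants depending only on a cross-section $\sigma$ and a crossing-probability bound $\rho<1$. Undoing the rescaling through the measure-preserving bijection $x\mapsto nx$ between $(E(\mathcal{R}_n),\mathbb{P}^n_{p_c})$ and $(E(S(n)),\mathbb{P}_{p_c})$ then yields: for every $\epsilon>0$ there is $C(\epsilon)$ so that, for all $n$ and all $A,B\subset S(n)$ with $\operatorname{dist}(A,B)\ge n/10$, the probability that $A$ and $B$ are joined \emph{inside} $S(n)$ by an open path using at most $C(\epsilon)n^s$ bonds is at most $\epsilon$; when $B$ separates $A$ from $S(n)^c$ the ``inside $S(n)$'' restriction is automatic, which is the form needed below.

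Stage (ii) exploits Pisztora's observation \cite{pisztora} that the only probabilistic ingredient above is the RSW crossing bound, which by definition of $L(p)$ survives, uniformly in $p$, for rectangles with long side $\le 3L(p)$; hence \eqref{eq:straight-runs} holds uniformly for $\ell^{-1}\le 3L(p)$ and $C(\epsilon)$ may be taken independent of $p\in(p_c,1)$, giving \eqref{eq:c-epsilon}. A renormalization on blocks of side $3L(p)$ --- a block is good when the local chemical distance exceeds $C(\epsilon)L(p)^s$, a short macroscopic path must meet many bad blocks, and the good-block field dominates a high-density product field --- converts this into the exponential bound \eqref{eq:renormalized-ab}. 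For stage (iii) I would then run the computation \eqref{eq: finaldistance}: on the event $\tilde H_n(1)$ every IPC edge in $\Lambda(n)\setminus S(n/2)$ is $p_{(n/4)-1}(1)$-open, because the invasion adds only edges of weight $\le p$ after it meets the $p$-open infinite cluster; hence the part of any short IPC path from $\mathbf 0$ to $\partial S(n)$ lying outside $S(3n/4)$ is a short $p_{(n/4)-1}(1)$-open path from some $x\in\partial S(3n/4)$ to $\partial S(x,n/4-1)$. A union bound over the $O(n)$ such $x$, combined with \eqref{eq:renormalized-ab} at $p=p_{(n/4)-1}(1)$ and $\mathbb{P}(\tilde H_n(1)^c)\lesssim n^{-M\C_{10}}$ from \eqref{eq:jarai-bound}, closes the estimate. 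Recalling \eqref{eq:l_p}, i.e. $n/L(p_n(1))\ge M\log n$, and taking $M$ large makes both terms $\lesssim n^{-2}$; slightly decreasing $s$ absorbs $L(p_{(n/4)-1}(1))^{s-1}$ and the polynomial prefactor and gives \eqref{eq:invasion-chemical}.

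I expect the main obstacle to be stage (iii): one must splice a fixed-$p$ product-measure estimate onto the genuinely non-product IPC law, which forces a careful calibration of the scale $p=p_n(1)$, equivalently $L(p_n(1))\asymp n/(M\log n)$. The constant $M$ must be chosen simultaneously large enough that (a) $\mathbb{P}(\tilde H_n(1)^c)\lesssim n^{-M\C_{10}}$ is summable; (b) the gain $\exp(-\C_9 n/L(p))=n^{-\C_9 M}$ in \eqref{eq:renormalized-ab}, after the union bound over $O(n)$ boundary points, is still $\lesssim n^{-2}$; and (c) the resulting distance threshold $nL(p)^{s-1}\asymp n^{s}(\log n)^{-(s-1)}$ remains of the form $\C_7 n^{s'}$ with $s'>1$. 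The other delicate point is keeping the Aizenman--Burchard capacity constant uniform in $p$ throughout stage (ii); this is precisely why one records that the estimate behind \eqref{eq:straight-runs} depends only on $\sigma$ and $\rho$.
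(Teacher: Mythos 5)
Your proposal is correct and follows essentially the same route as the paper's proof: the critical Aizenman--Burchard capacity bound via rescaling and RSW, Pisztora's extension to $p>p_c$ up to scale $L(p)$ with a block argument yielding the exponential estimate \eqref{eq:renormalized-ab}, and the transfer to the IPC via $\tilde H_n(1)$, a union bound over $\partial S(3n/4)$, and a large choice of $M$. The delicate points you flag (uniformity in $p$ of the capacity constants, and the calibration $L(p_n(1))\asymp n/(M\log n)$ with the logarithm absorbed by lowering $s$) are exactly the ones the paper addresses.
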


\begin{Remark} The final part of the proof of Lemma \ref{pisztoraslemma} shows that for any $0< R_1 < R_2 < R_3$ and any $k>0$, one can find constants (depending on $R_i$ and $k$) such that
\[\mathbb{P}_{\mathrm{IPC}}(\operatorname{dist}_{\Lambda(R_3n)}(\partial S(R_2 n), \partial S(R_1 n) \cup \partial S(R_3 n)) \le Cn^s) \lesssim n^{-k}.\]
Here $s>1$ is the constant appearing in (\ref{eq:invasion-chemical}). Such a statement will be used in Section \ref{section-comparison} below.
\end{Remark}

\begin{proof}[Proof of Theorem \ref{kesten-ipc-1}]
For $s>1$, let $L_{n}$ be the event
\[\{\operatorname{dist}_{\Lambda(n)}(\mathbf{0},\partial S( n))\le \C_7 n^s\}.\]
By (\ref{eq:invasion-chemical}) in the previous lemma, we have
\begin{equation}
\label{eq:l_n-sum}
\sum_{n\ge 1} \mathbb{P}_{\mathrm{IPC}}(L_n) <\infty
\end{equation}
for some $s>1$. Applying the Borel-Cantelli lemma and choosing \[\omega \in \{L_n \text{ occurs infinitely often}\}^c,\] we can use (\ref{eq-strong-cv}) with $\lambda = 4^s \cdot \C_6^{s/2} (\log n)^{s/2}$ and $\C_5=\C_7$; for some $N(\omega)$ we have: 
\[\sum_{n\ge N(\omega)} \mathbf{P}^\omega( \tau^*(n) \le  n^{2s}/\lambda^2 ) \le \sum_{n\ge N(\omega)} \sum_{k\le n^{2s}/\lambda^2 } \mathbf{P}^\omega (|X^{n,B}(k)|_\infty\ge n)<\infty.\]
A second application of the Borel-Cantelli lemma leads to Theorem \ref{kesten-ipc-1}.
\end{proof}
Note that for the argument above it was not necessary to consider $X^{n,B}$. However, the decomposition of the IPC into a backbone and ``dangling ends'' will be central in the derivation of Theorem \ref{main-theorem} below. The proof of Theorem \ref{kesten-ipc-1} shows that $X^{n,B}$ alone already contributes at least $n^{2+\epsilon}$ steps to $\tau(n)$.

\section{\large{Kesten's comparison argument}}
\label{section-comparison}

Our modification of Kesten's argument compares the volume of sites in the invasion percolation cluster (IPC) to the volume of sites on the backbone to conclude that the walk must be subdiffusive.

\subsection{Preliminaries and a key lemma}

We assume for simplicity of notation that $n = 3m$, $m\in\mathbb{Z}^+$. We introduce two stopping times: 
\begin{align*}
\tau(2m) &= \inf\{k\ge 0: X(k) \in \partial S(2m)\}\\
\sigma^+(m) &= \inf\{ k\ge \tau(2m): X(k) \in \partial S(m) \cup \partial S(n)\}.
\end{align*}
By definition, we clearly have:
\[\tau(n)=\tau(3m) \ge \sigma^+(m)-\tau(2m).\]
Hence, it will suffice to obtain a lower bound on the right side of the previous expression.
\[Y(k) = X(\tau(2m) +k), \ k\geq 0\] is a simple random walk on IPC; now define $Y^n$ to be the simple random walk on (the possibly disconnected) 
\[
\Gamma(n) = \text{IPC} \cap (S(n) \setminus S(m))
\]
with initial point $Y(0)$. Letting $\sigma^*(n)$ be the hitting time of $\partial S(m) \cup \partial S(n)$ by the walk $Y^n$, we note that $\sigma^*(n)$ has the same distribution as $\sigma^+(m)-\tau(n)$.

A key tool in Kesten's argument is the following result from \cite{kesten}, expressing the spatial ``smoothness'' of the local times for a reversible Markov chain.

\begin{lemma}[\cite{kesten}, Lemma 3.18]\label{continuity}
Let x, y be two sites in $\Gamma(n)$, and let
\[L(x,k) =\sharp \{l:0\le l \le k,\  Y^n(l)=x\}\]
be the local time at a site $x$ of the walk $Y^n$.
Then, for some $L_0>0$ and any $\lambda>1$:
\begin{multline}
\label{eq: localtimes}
\mathbf{P}^\omega\left(\exists k, \ L(y,k)\ge \lambda\operatorname{dist}_{\Gamma(n)}(x,y) \text{ and } L(x,k)\le \frac{1}{2}\frac{\operatorname{deg}_{\Gamma(n)}(x)}{\operatorname{deg}_{\Gamma(n)}(y)}L(y,k)\right)\\ 
\lesssim \operatorname{dist}_{\Gamma(n)}(x,y)\exp(-\lambda/L_0).
\end{multline}
\end{lemma}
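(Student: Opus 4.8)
The plan is to follow Kesten's strategy of comparing the walk $Y^n$ to the sequence of its successive excursions between $x$ and $y$, and to control the local times through a geometric-type estimate on these excursions. First I would fix $x,y\in\Gamma(n)$ lying in the same connected component (if they lie in different components the left side is zero and the inequality is trivial), and write $d=\operatorname{dist}_{\Gamma(n)}(x,y)$. The key structural observation is that between two consecutive visits to $y$, the walk either does not reach $x$ at all, or it does; and on each such excursion the probability of reaching $x$ before returning to $y$ is bounded below by a quantity of order $1/d$. This is the standard fact for reversible chains: the effective resistance between $x$ and $y$ in $\Gamma(n)$ is at most $d$ (there is a path of length $d$, and resistances of edges are between $1/(4\operatorname{deg})$-type bounds since degrees are bounded by $4$ on $\mathbb{Z}^2$), so the escape probability from $y$ to $x$ satisfies $\mathbf{P}^\omega_y(\text{hit }x\text{ before returning to }y)\gtrsim 1/d$ after accounting for the bounded degree.

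Next I would encode the two local times via these excursions. Let $N$ be the number of excursions from $y$ (i.e. roughly $L(y,k)$ counts visits to $y$, and between consecutive visits there is one excursion). On the event in question, $L(y,k)\ge \lambda d$, so there are at least of order $\lambda d$ excursions; I want to show that with overwhelming probability a positive fraction of them reach $x$, and moreover that each visit to $x$ tends to be followed by a macroscopic number of further visits to $x$ before returning to $y$ — this is what converts "$x$ is reached" into "$L(x,k)$ is comparable to $L(y,k)$" with the correct degree ratio $\tfrac12\operatorname{deg}_{\Gamma(n)}(x)/\operatorname{deg}_{\Gamma(n)}(y)$. Concretely, by reversibility the expected number of visits to $x$ per visit to $y$ is exactly $\operatorname{deg}_{\Gamma(n)}(x)/\operatorname{deg}_{\Gamma(n)}(y)$ (this is the ratio of stationary masses), so the event that $L(x,k)\le \tfrac12\tfrac{\operatorname{deg}(x)}{\operatorname{deg}(y)}L(y,k)$ is a large-deviation event for a sum of i.i.d.\ (across excursions) nonnegative increments whose mean is $\operatorname{deg}(x)/\operatorname{deg}(y)$. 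Since there are $\ge \lambda d$ such increments, a Chernoff/Bernstein bound gives a probability decaying like $\exp(-c\lambda d/(\text{something}))$; the factor $d$ from the number of terms combines with the $1/d$-type per-excursion escape probability so that the exponent is proportional to $\lambda$ alone, yielding $\exp(-\lambda/L_0)$. The prefactor $d=\operatorname{dist}_{\Gamma(n)}(x,y)$ comes from a union bound over the (at most polynomially many, hence absorbed, or more precisely linearly many) possible values of $d$ or over a discretization of the stopping time $k$, exactly as in \cite[Lemma 3.18]{kesten}.

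To make the "sum of i.i.d.\ increments" rigorous I would use the strong Markov property at successive returns to $y$: the increments $\xi_i=\#\{\text{visits to }x\text{ during }i\text{-th excursion from }y\}$ are i.i.d., with $\mathbf{E}\xi_i=\operatorname{deg}_{\Gamma(n)}(x)/\operatorname{deg}_{\Gamma(n)}(y)$ (a consequence of the cycle/occupation identity for reversible chains) and with exponential tails (geometric number of returns to $x$ before hitting $y$, again by reversibility and bounded degree). Then $L(x,k)=\sum_{i\le L(y,k)}\xi_i$ up to a boundary term of size $O(1)$, and the event is $\{\sum_{i\le m}\xi_i\le \tfrac12 m\,\mathbf{E}\xi_1\text{ for some }m\ge \lambda d\}$; a maximal inequality plus Cramér's bound finishes it. The main obstacle, as in Kesten's original argument, is bookkeeping: one must handle the quantifier "$\exists k$" (passing from a fixed time to all times via the excursion decomposition, so that $L(y,k)$ is replaced by an integer number of completed excursions), control the boundary effects when the walk is cut off by $\partial S(m)\cup\partial S(n)$ inside $\Gamma(n)$ (the chain may be killed mid-excursion, but this only decreases local times and can be dominated), and verify that all the per-excursion constants are uniform in $\omega$ because degrees in the IPC are bounded by $4$ and hence the resistance estimates and tail bounds are deterministic. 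I expect the large-deviation step itself to be routine once the excursion structure is set up; the delicate point is ensuring the two competing $d$-dependencies cancel to leave the clean $\exp(-\lambda/L_0)$ with only a linear-in-$d$ prefactor.
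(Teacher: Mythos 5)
Your proposal is correct and is essentially a reconstruction of the argument the paper invokes (Kesten's proof of Lemma 3.18, to which the paper defers): excursion decomposition at returns to $y$, the resistance bound $\mathbf{P}_y(\text{hit }x\text{ before returning to }y)\gtrsim 1/\operatorname{dist}_{\Gamma(n)}(x,y)$ from the bounded degree and a geodesic path, the occupation identity $\mathbf{E}\xi_i=\operatorname{deg}_{\Gamma(n)}(x)/\operatorname{deg}_{\Gamma(n)}(y)$, and a Chernoff bound giving $\exp(-cm/d)$ per fixed number $m$ of completed excursions, with the prefactor $d$ arising from summing the geometric series over $m\ge\lambda d$. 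The bookkeeping points you flag (the quantifier over $k$, the initial segment before the first visit to $y$) all go in the favorable direction, so no further ideas are needed.
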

In \cite{kesten}, Lemma \ref{continuity} is stated in terms of the intrinsic distance on the incipient infinite cluster. Replacing $\|x-y\|_{m,w}$, $d(x)$ and $d(y)$ in  the proof of Lemma 3.18 in \cite{kesten} by $\operatorname{dist}_{\Gamma(n)}$, $\deg_{\Gamma(n)}(x)$ and $\deg_{\Gamma(n)}(y)$, respectively, we obtain Lemma \ref{continuity} above.

We also modify our definition of the backbone. $\tilde{B}(n)$ is defined to be the set of sites in $\Gamma(n)$ connected by two disjoint paths (in $\Gamma(n)$) to $\partial S(n)$ and $\partial S(m)$. $Y^{n,\tilde{B}}$ is the induced walk on $\tilde{B}$, defined analogously to $X^{n,B}$ in Section \ref{section-backbone}. We let $b(n)$ be the number of steps $Y^{n,\tilde{B}}$ takes between $0$ and $\sigma^*(n)$; $b(n)$ is the time spent by $Y^n$ on $\tilde{B}(n)$.

\subsection{Sketch of the proof of Theorem~\ref{main-theorem}}
Kesten's comparison argument will be applied to $Y^n$. The idea is to consider a ``thickening'' of the backbone of size $q$. By Lemma \ref{continuity}, if a box $S(v,q)$ of size $q$ contains a site $x\in \tilde{B}(n)$ with $L(x,\sigma^*(n))\gg q^2 L_0$, the random walk visits all accessible sites of $\Gamma(n)$ inside $S(v,q)$ at least $CL(x,\sigma^*(n))$ times, with high probability. If it is traversed by a portion of the random walk, the box $S(v,q)$ typically contains  $q^2\pi(q)$ sites of $\Gamma(n)$, and at most $q^2 \rho(q)$ sites of $\tilde{B}(n)$. Thus the time spent by $Y^n$ in $S(q,v)$ up to $\sigma^*(n)$ is larger than the time $Y^{n,\tilde{B}}$ spends there by a factor of at least $\pi(q)/\rho(q)$. By choosing $q$ appropriately, the set of sites $y$ on the backbone which do not satisfy the lower bound of order $q^2L_0$ on $L(y,\sigma^*(n))$ will make a contribution bounded by a fraction of the total time spent on the backbone.

\subsection{Proof of Theorem~\ref{main-theorem}}

To realize the strategy just described, we tile $S(n)\setminus S(m)$ by squares of size
\begin{equation}
\label{eq: qdef}
q=   Q\cdot \frac{n^{\eta_2/2}}{(\log n)^{3/2}}\end{equation}
for a constant $Q$ to be determined. Here $\eta_2$ is the exponent appearing in (\ref{eq: twoarmprob}). We note for future reference that
\begin{equation}
\label{eq: twoarmbound}
\eta_2 \leq 1 \text{ so that } q = o(\sqrt n)\ .
\end{equation}
This bound on $\eta_2$ can be proved using the method of \cite[Cor.~3.15]{vdbkesten}. For the details, the reader can see a standard sketch of a similar inequality (for crossings of an annulus) under equation \eqref{eq:twoarms} later in the paper.

For $\mathbf{j}=(j_1,j_2)\in \mathbb{Z}^2$, define
\begin{align*}
D(\mathbf{j},q) &= [q j_1, q(j_1+1))\times[q j_2,q (j_2+1)),\\
F(\mathbf{j},q) &= [q (j_1-1),q(j_1+2)]\times [q (j_2-1), q(j_2+2)].
\end{align*}
Given a realization $\omega$ of IPC and a realization of the walk, we follow the path of $Y^{n}$ until $\sigma^*(n)$ by introducing two sequences $\{l_i\}$ and $\{\mathbf{j}_i\},$ first setting $l_0=0$ and $\mathbf{j}_0$ to be the index such that $Y^{n}(l_0)\in D(\mathbf{j}_0,q)$ and then defining $l_i$ by
\begin{align*}
l_{i+1} &=\min \{l>l_i, Y(l) \notin F(\mathbf{j}_i,q)\}\\
Y^n(l_{i+1})&\in D(\mathbf{j}_{i+1},q).
\end{align*}
$Y^n$ may reach $\partial S(m) \cup \partial S(n)$ before leaving $F(\mathbf{j}_i,q)$, in which case $l_i$ ends the sequence. We let $\mathcal{C}(i)$ denote the component of $F(\mathbf{j}_i,q)\cap \Gamma(n)$ containing $Y^n(l_i)$. $Y^n$ may return several times to the same square, so $\mathcal{C}(j)$ may be equal to $\mathcal{C}(i)$ for $i\neq j$. Enumerating the $\mathcal{C}(i)$ without repetition as $\iota_0, \iota_1, \ldots , \iota_\lambda$, with $\mathcal{C}(\iota_\lambda)$ the component of $F(\mathbf{j}_i,q)$ where $i$ is such that
\[Y^n(\sigma^*(n))\in F(\mathbf{j}_i,q),\]
we define
\begin{align*} \Lambda(\iota) &= \sum_{x\in \mathcal{C}(\iota)} L(x,\sigma^*(n))\\
\Theta(\iota) &= \sum_{x\in \tilde{B}(n) \cap\mathcal{C}(\iota)} L(x, \sigma^*(n)).
\end{align*}

Since any $x$ belongs to at most $16$ different $F$ squares, we have:
\[\frac{1+\sigma^*(n)}{1+b(n)}=\frac{\sum_x L(x,\sigma^*(n))}{ \sum_{x\in \tilde{B}(n)} L(x,\sigma^*(n))}\ge \frac{1}{16}\frac{\sum_\iota \Lambda(\iota)}{\sum_\iota \Theta(\iota)}.\]

We now state volume estimates analogous to those obtained in \cite{kesten} for the incipient infinite cluster; they will be derived in the next section. We will only be concerned with those indices in the set
\[\mathcal{J}=\{\mathbf{j}\in \mathbb{Z}^2: F(\mathbf{j},q)\cap (S(n)\setminus S(m)^\circ) \neq \emptyset\}.\] 
The first estimate is for the number of backbone sites in any $F$ square; for any $\mathbf{j}\in\mathcal{J}$, we have:
\begin{equation}
\label{eq-backbone-bound} 
\mathbb{P}_{\mathrm{IPC}}\left( \sharp (\tilde{B}(n) \cap F(\mathbf{j},q))\ge  \frac{c}{\C_{12}} q^2\rho(q)\log q \right)\lesssim q^{-c}.
\end{equation}

The second provides, with high probability, a lower bound for the number of sites of the IPC in a box $F(\mathbf{j},q)$, $\mathbf{j}\in \mathcal{J}$, given that there is a crossing of $F(\mathbf{j},q)\setminus D(\mathbf{j},q)$:
\begin{equation}
\label{eq-crossing-bound} 
\mathbb{P}_{\mathrm{IPC}}\left(\begin{gathered}\text{there exists a crossing } r \subseteq \Gamma(n) \text{ of } F(\mathbf{j},q)\setminus D(\mathbf{j},q)\\ \text{ with } \sharp \{x\in F(\mathbf{j},q): x \text{ connected to } r \text{ in }\\ \Gamma(n)\cap F(\mathbf{j},q) \} \le q^{2}\pi(q)/(\log q)^4 \end{gathered}\right)\lesssim_c q^{-c}
\end{equation}
Here $c$ is arbitrary but the implicit constant depends on the choice of $c$.

\subsection{The events $E_i(n)$ and $W_i(n)$}

We now define the events $E_i(n)$, $1\le i\le 4$ and $W_i$, $1 \le i \le 3$. The ratio $\sum \Lambda(\iota)/\sum \Theta(\iota)$ will be bounded below by $\pi(q)/\rho(q)$ on the event $(\cap_i E_i) \cap (\cap_i W_i)$.

\begin{enumerate}
\item \[E_1(n) = \left\{ \omega: \operatorname{dist}_{\Gamma(n)}(\partial S(2m),\partial S(n)\cup \partial S(m)) \ge \C_{13} n^{s} \right\}.\]
\item \[E_2(n) = \left\{ \begin{gathered} \omega: \sharp (\tilde{B}(n) \cap F(\mathbf{j},q)) \le \C_{14} q^2\rho(q)\log q\\  \text{for all } \mathbf{j} \in \mathcal{J}\end{gathered} \right\}.\]

\item \[E_3(n) = \left\{ \begin{gathered}\omega: \sharp \{x\in F(\mathbf{j},q): x \text{ connected to } r \text{ in } \\
\Gamma(n)\cap F(\mathbf{j},q) \} \geq q^{2}\pi(q)/(\log q)^4 \text{ for all }\\ \mathbf{j}\in \mathcal{J} \text{ and any crossing } r \subseteq \Gamma(n) \text{ of } F(\mathbf{j},q)\setminus D(\mathbf{j},q) \end{gathered} \right\}. \]
\item \[E_4(n) = \left\{ \omega: \sharp \tilde{B}(n) \le  \frac{4}{\C_{15}} n^2\rho(n) (\log n)^2 \right\}. \]
\item \label{item-W1} \[W_1(n) = \{b(n) \geq n^{2s'}/\log n\}.\]
\item \[
W_2(n) =  \left\{  \begin{gathered}
1/8 \le \frac{L(x,\sigma^*(n))}{L(y,\sigma^*(n))} \le 8  \text{ for each pair } x,y \in S(n)\smallsetminus S^{\circ}(m)\\
 \text{ such that } x,y \text{ belong to the union of two}
 \text{ clusters }\\ \mathcal{C}(i), \mathcal{C}(i+1)
\text{ traversed consecutively by } Y^n \text{ and }\\
 L(x,\sigma^*(n)) \ge 320 L_0q^2\log n \end{gathered} \right\}.\]
 \item \[W_3(n) =\left\{ \begin{gathered} L(x,\sigma^*(n)) \le 2560 L_0q^2\log n \\ \text{ for any } x \text{ in a cluster } \mathcal{C}(i)
 \text{ such that } \\ L(y,\sigma^*(n)) \le 320 L_0 q^2\log n \\ \text{ for some } y\in \mathcal{C}(i) \end{gathered}
 \right\}.\]
\end{enumerate}

By the remark following the proof of Lemma \ref{pisztoraslemma}, there exists $\C_{13}$ such that
\[\mathbb{P}_{\mathrm{IPC}}(E_1(n)^c) \lesssim n^{-2},\]
with the same constant $s$ as in (\ref{eq:invasion-chemical}).
For any $1<s'<s$ and $\omega \in E_1(n)$, we use the Carne-Varopoulos estimate (\ref{eq-strong-cv}), applied to the symmetric chain $Y^{n,\tilde{B}}$, to show as in the proof of Theorem \ref{kesten-ipc-1}:
\begin{equation}
\label{eq: W1bound}
\mathbf{P}^\omega (W_1(n)^c) \lesssim n^{-2}, \quad \omega \in E_1(n),
\end{equation}
giving the bound (\ref{kesten-ipc-1}).

Recall the definition of $q$ in \eqref{eq: qdef}. We have 
\[q^{-1} = o(n^{-\eta_2/4}).\] Noting that there are, up to a constant, at most $n^2/q^2$ indices $\mathbf{j}$ in $\mathcal{J}$, and choosing $\C_{14}\ge 16/(\C_{12}\eta_2)$ in (\ref{eq-backbone-bound}), and accordingly in the definition of $E_2(n)$, we find
\[ \mathbb{P}_{\mathrm{IPC}}(E_2(n)^c)\lesssim n^{-2}.\]
By the estimate (\ref{eq-big-backbone-bound}) in Section \ref{section-volume}:
\[ \mathbb{P}_{\mathrm{IPC}}(E_4(n)^c) \lesssim n^{-2}.\]
By (\ref{eq-crossing-bound}) (for some $c$ large enough), we have
\[\mathbb{P}_{\mathrm{IPC}}(E_3(n)^c) \lesssim n^{-2}.\]

Finally, we have
\[ \mathbf{P}^\omega(W_2(n)^c), \mathbf{P}^\omega(W_3(n)^c) \lesssim n^{-2}\] 
uniformly in $\omega$. Indeed, suppose $x$ and $y$ are two sites as in the description of $W_2(n)$, then, for $n$ sufficiently large,
\[\operatorname{dist}_{\Gamma(n)}(x,y) \le (6q+1)^2\le 40q^2\]
for any $\omega$.
Using 
\[1\le \operatorname{deg}_{\Gamma(n)}(x), \operatorname{deg}_{\Gamma(n)}(y) \le 4\] 
for any $x,y$ in the IPC, we find that on $W_2(n)^c$, for some pair $x,y$ either
\[L(x,\sigma^*(n)) \le \frac{1}{8}L(y,\sigma^*(n)) \le \frac{1}{2}\frac{\operatorname{deg}_{\Gamma(n)}(y)}{\operatorname{deg}_{\Gamma(n)}(x)}L(y,\sigma^*(n))\]
and
\[L(y,\sigma^*(n)) \ge 8 \cdot 320 L_0 q^2\log n \ge 64 L_0 \log n \operatorname{dist}_{\Gamma(n)}(x,y) \]
or
\[L(y,\sigma^*(n)) \le \frac{1}{2}\frac{\operatorname{deg}_{\Gamma(n)}(y)}{\operatorname{deg}_{\Gamma(n)}(x)}L(x,\sigma^*(n))\]
and
\[L(x,\sigma^*(n))) \ge 8L(y,\sigma^*(n)) \ge 8 L_0 \log n \operatorname{dist}_{\Gamma(n)}(x,y).\]
The first case is contained in the event appearing in (\ref{eq: localtimes}). So is the second case, after reversing the roles of $x$ and $y$ in that event. Using \[\operatorname{dist}_{\Gamma(n)}(x,y) \lesssim n^2,\] applying Lemma \ref{continuity}, and taking the union over all pairs, $x,y \in S(3m)\setminus S(m)$, we find that, whatever $\omega$ in the support of $\mathbb{P}_{\mathrm{IPC}}$:
\[\mathbf{P}^\omega(W_2(n)^c)\lesssim \sum_{x,y\in S(3m)\setminus S(m)} \sharp (S(3m)\setminus S(m)) \cdot \exp ( - 8\log n ) \lesssim  n^6 n^{-8}.\]
A similar argument applies to $W_3(n)$.

\subsection{End of the proof}

Applying the Borel-Cantelli lemma to 
\[E_1(n)^c\cup E_2(n)^c \cup E_3(n)^c \cup E_4(n)^c,\]
we find that for $\mathbb{P}_{\mathrm{IPC}}$-almost every $\omega$, there exists $N(\omega)$ such that $\cap_i E_i(n)$ holds when $n\ge N(\omega)$. For any such $\omega$, a further application of the Borel-Cantelli lemma shows that, $\mathbf{P}^\omega$-almost surely, $\cap_i W_i(n)$ holds for $n$ large enough.

It remains to show that whenever all the events above hold, we have the subdiffusive bound of Theorem~\ref{main-theorem}. 
First, on $E_4(n)\cap W_3(n)$, if we denote by $\sum^*$ the sum over indices $\iota$ such that $\mathcal{C}(\iota)$ contains a site $x_\iota \in \tilde{B}(n)$ with
\[L(x_\iota,\sigma^*(n))\ge 320 L_0 q^2\log n,\]
then, assuming $W_1(n)$ also occurs, adjusting the constant $Q$ in the definition of $q$ (see \eqref{eq: qdef}):
\begin{align*}(\sum_\iota -\sideset{}{^*}\sum_\iota) \ \Theta(\iota) &\le 16 \cdot 320L_0q^2 \log n \cdot \sharp \tilde{B}(n) \\
&\le \frac{1}{2}b(n).
\end{align*}
It follows that
\begin{equation}
\label{eq: sumcompare}
\frac{1+\sigma^*(n)}{1+b(n)}\ge \frac{1}{32}\frac{\sum\nolimits^* \Lambda(\iota)}{\sum\nolimits^* \Theta(\iota)}.
\end{equation}
On $W_2(n)$, letting $y_\iota$ be the lexicographically earliest point of $\tilde{B}(n)$ in $\mathcal{C}(\iota)$, we have for those indices $\iota \le \lambda$ occurring in $\sum^*$:
\begin{align*}
\Lambda(\iota)&\ge \frac{1}{8}L(y_\iota,\sigma^*(n))\cdot \sharp\mathcal{C}(\iota)\\
\Theta(\iota) &\le 8L(y_\iota,\sigma^*(n)) \cdot \sharp\tilde{B}(n)\cap \mathcal{C}(\iota).
\end{align*}
For each $\iota\le\lambda$, $F(\mathbf{j}_\iota,q)\setminus D(\mathbf{j}_\iota,q)$ contains an invaded crossing in $\mathcal{C}(\iota)$. Thus, on $E_2(n)\cap E_3(n)$, we can write:
\[\frac{\Lambda(\iota)}{\Theta(\iota)}\ge \frac{1}{8^2 \C_{14}}\frac{\pi(q)}{\rho(q)} \frac{1}{(\log q)^5}, \quad \iota\le\lambda\]
Bounding every $\Lambda(\iota)$ term below individually in (\ref{eq: sumcompare}) and using the BK inequality, we find:
\begin{equation}\label{eq: missing_BK}
\frac{1+\sigma^*(n)}{1+b(n)} \gtrsim \frac{\pi(q)}{\rho(q)}\frac{1}{(\log q)^6} \gtrsim \frac{1}{\pi(q)(\log q)^5}.
\end{equation}
On $W_1(n)$, we have $b(n)\ge n^{2s'}/\log n$. Recalling the definition of $q$ from \eqref{eq: qdef}, we have the following bound for $\pi(q)$. 
\[\pi(q) \lesssim q^{-\eta_1} \lesssim n^{-(1/2)\eta_1\eta_2} (\log n)^{3\eta_1/2}\]
Choosing $\epsilon_0>0$ such that $2 < 2+\epsilon_0 < 2s'$, we obtain:
\[\tau(n)\ge \sigma^*(n) \gtrsim \frac{n^{2s'}}{\log n} \cdot \frac{1}{\pi(q)(\log q)^5} \gg n^{2+\frac{1}{2}\eta_1\eta_2+\epsilon_0}, \]
the desired result.

\section{\large{Derivation of the volume estimates}}
\label{section-volume}
In this section we prove the volume estimates (\ref{eq-backbone-bound}) and (\ref{eq-crossing-bound}). 

\subsection{Estimates for the size of the backbone}
We show that the following moment bounds hold for $\mathbf{j} \in \mathcal{J}$:
\begin{align}
\label{eq-small-moments}
\mathbb{E}_{\mathrm{IPC}}(\sharp F(\mathbf{j},q)\cap \tilde{B}(n))^k & \le k! \cdot (\C_{16} q^2\rho(q))^k\\
\label{eq-big-moments}
\mathbb{E} _{\mathrm{IPC}}(\sharp \tilde{B}(n))^k & \le k! \cdot k^k  \cdot (\C_{17}n^2\rho(n))^k, 
\end{align}
for $k=1,2...$ and constants $\C_{16}$, $\C_{17}$. 

The estimate (\ref{eq-small-moments}) implies the existence, for $\lambda>0$ small enough, of the exponential moment:
\[\mathbb{E}_{\mathrm{IPC}}\exp(\lambda \sharp F(\mathbf{j},q)\cap \tilde{B}(n)) <\infty.\] 
Applying Chebyshev's inequality with $\lambda = 1/(2\C_{16} q^2\rho(q))$ yields ($\ref{eq-backbone-bound}$). 

From (\ref{eq-big-moments}), we obtain the finiteness, for sufficiently small $\lambda$, of the stretched exponential moment:
\[\mathbb{E}_{\mathrm{IPC}}\exp(\lambda \sharp (\tilde{B}(n))^{1/2}) <\infty.\]
Using Chebyshev's inequality with $\lambda = 1/(2(e\C_{17} n^2\rho(n))^{1/2})$, we obtain, for each $c>0$:
\begin{equation}
\label{eq-big-backbone-bound}
\mathbb{P}_{\mathrm{IPC}}\left( \sharp \tilde{B}(n) \ge  \frac{c}{\C_{15}} n^2\rho(n)(\log n)^2\right)\lesssim n^{-c^{1/2}}.
\end{equation}

To derive (\ref{eq-small-moments}) and (\ref{eq-big-moments}), we follow the method introduced by J\'arai \cite{jarai} to estimate the moments of the volume $|\Lambda|$ of the IPC in a box. We will instead apply this argument to the volume of a backbone, and then combine it with an inductive argument of Nguyen \cite{nguyen}.

We begin with the first moment ($k=1$) in (\ref{eq-small-moments}). If $F(\mathbf{j},q)\subset S(n)\setminus (S(m))^\circ$, the number of sites of $F(\mathbf{j},q)$ with two disjoint connections in the IPC to $\partial F(\mathbf{j},q)$ provides an upper bound for the volume of $F(\mathbf{j},q)\cap \tilde{B}(n)$.  Let $Z_q(\mathbf{j},j)$ denote the set of sites in $F(\mathbf{j},q)$ with two $p_{2m}(j)$-open connections to $\partial F(\mathbf{j},q)$.  Note that 
\[\tilde{B}(n) \subset S(m)^c.\]
On $H_{2m}(j)$ (defined in \eqref{eq:jarai-bound}), every edge of the IPC in $S(m)^c$ is $p_{2m}(j)$-open, as noted at the end of Section \ref{section-invasion}, and thus:
\begin{align} \label{eq: small-moment-sum} \mathbb{E}(\sharp(\tilde{B}(n) \cap F(\mathbf{j},q)))  &\le \mathbb{E}(\sharp (\tilde{B}(n)\cap F(\mathbf{j},q)) ;H_{2m}(1)^c) \nonumber\\
&+\sum_{j=2}^{\log^*2m}\mathbb{E}(\sharp Z_q(\mathbf{j},j-1);H_{2m}(j-1)\cap H_{2m}(j)^c)\\
&+\mathbb{E} (\sharp Z_q(\mathbf{j},\log^*2m)).\nonumber
\end{align}
The first term is bounded up to a constant factor by: 
\[(3q)^2\cdot \mathbb{P}(H_{2m}(1)^c) \lesssim q^2(2m)^{-\C_0M}.\]
The terms of the sum are estimated using the Harris-FKG inequality:
\begin{align}
\mathbb{E}(\sharp Z_q(\mathbf{j},j-1);H_{2m}(j-1)\cap H_{2m}(j)^c) &\le \mathbb{E}(\sharp Z_q(\mathbf{j},j-1)) \cdot \mathbb{P}(H_{2m}(j)^c) \nonumber \\
&\lesssim \mathbb{E}(\sharp Z_q(\mathbf{j},j-1)) \exp(-M\C_0 \log^{(j)} 2m) \label{eq: mac_and_cheese}
\end{align}
By decomposing $F(\mathbf{j},q)$ according to the distance $l$ to $\partial F(\mathbf{j},q)$, we find:
\begin{align}
\label{eq: nsquared}
\mathbb{E}\sharp (Z_q(\mathbf{j},j-1)) & \lesssim \sum_{l=1}^{3q/2} q \rho(p_{2m}(j-1),l) \\
&\lesssim \sum_{l \le \lfloor L(p_{2m}(j-1))\rfloor } q \rho(p_{2m}(j-1),l) \label{eq: smoothsum} \\
&+\  q^2\rho(p_{2m}(j-1),L(p_{2m}(j-1))) \cdot \mathbf{1}[\ 3q/2 > \lfloor L(p_{2m}(j)) \rfloor\ ]. \nonumber
\end{align}
By the same argument used for (7) in \cite{kesten2} (see Remark (37) there), the sum up to $L(p_{2m}(j-1))$ in (\ref{eq: smoothsum}) is bounded up to a constant by 
\[q^2\rho(p_{2m}(j-1),L(p_{2m}(j-1))).\] 
The proof in \cite{kesten2} is carried out for $p = p_c$, but the implicit constants that appear are due to applications of RSW theory and thus are uniformly bounded in $p>p_c$. By comparability of the arm exponents below $L(p)$ \cite{kesten3} (see also \cite[Theorem~26]{nolin}), we have
\[\rho(p_{2m}(j-1),L(p_{2m}(j-1)) \lesssim \rho(p_c, L(p_{2m}(j-1))).\]
Thus, finally, in (\ref{eq: nsquared}), we have (since $q\le 2m$)
\begin{align}
\label{eq: rholessthanlog}
\mathbb{E}\sharp (Z_q(\mathbf{j},j-1)) &\lesssim  q^2\rho(p_c,2m) M \log^{(j-1)}2m\\
&\lesssim q^2\rho(p_c,q) M \log^{(j-1)}2m, \nonumber
\end{align}
where in the first step we have used the inequality
\begin{equation}
\label{eq:twoarms}
\frac{\rho(p_c,r)}{\rho(p_c,s)}\gtrsim \frac{s}{r}
\end{equation}
for $r\le s$. A similar inequality for $\pi(p_c,n)$ was used in \cite{jarai}, where the author indicates that it can be proved by the argument in \cite[Corollary~3.15]{vdbkesten}. The proof of (\ref{eq:twoarms}) follows the same general strategy, but does not use the van den Berg-Kesten inequality: let $k=0,1,2, \ldots, \lceil r/s \rceil$, $v_k=\mathbf{0}\pm 2ks$ and consider the annuli $S(v_k,r)\setminus S^\circ(v_k,s)$. The inner squares of these $2\lceil r/s \rceil+1$  annuli are adjacent. The event that there exists a $p_c$-open left-right crossing of $S(r)$ has probability bounded below uniformly in $r$, and implies that one of the annuli is crossed by two disjoint $p_c$-open paths. By quasi-multiplicativity, this probability is comparable to $\rho(r)/\rho(s)$, and (\ref{eq:twoarms}) follows by a union bound.

Inserting (\ref{eq: rholessthanlog}) into \eqref{eq: mac_and_cheese} and then into (\ref{eq: small-moment-sum}), we find
\begin{multline*}
\mathbb{E}(\sharp (\tilde{B}(n)\cap F(q,\mathbf{j})) )\lesssim q^2\rho(p_c,q) \\ \times \left(\frac{\exp(-M\C_0\log 2m)}{\rho(p_c,q)}+ M \sum_{j=2}^{\log^* 2m}(\log^{(j-1)} 2m)^{-M\C_0} \log^{(j-1)} 2m +M\right)
\end{multline*}

The final term corresponds to $\mathbb{E}(\sharp Z_q(\mathbf{j},\log^*2m))$, which is $O(q^2 \rho(p_c, q))$ by \eqref{eq: rholessthanlog}. Using \eqref{eq:twoarms}, we may choose $M$ large enough to make the first term $O(1).$ An important point made in \cite{jarai} is that choosing $M$ possibly larger, we may bound the contribution from the sum in the parentheses by a constant. Indeed, we have
\begin{equation}
\label{eq:logsconverge}\sup_{n\ge 1} \sum_{j=1}^{\log^*n}\left(\log^{(j)} n\right)^{-1} \lesssim 1.
\end{equation}

This establishes (\ref{eq-small-moments}) for $k=1$. To deal with the higher moments, we use the following general lemma:

\begin{lemma}\label{nguyenslemma}
Let $p_c\le p\le 1$, $n\ge1$, and $C_n(p)$ be the set of sites of $S(n)$ with two disjoint $p$-open connections to $\partial S(n)$.  There exists a constant $\C_{18}$ independent of $n$ and $p$ such that, for any $k\ge 1$, the following inductive bound holds:
\[\mathbb{E}(\sharp C_n(p))^{k+1} \le \C_{18} (k+1) n^2 \rho(p,n) \mathbb{E}(\sharp C_n(p))^k.\]
\begin{proof}
The result is essentially due to Nguyen \cite{nguyen}, who proved that for $p\ge p_c$ and $L\le L(p)$,
\[\mathbb{E}(\sharp W_L)^{k+1} \le  \C_{19} (k+1) L^2\pi(p,L)\mathbb{E}(\sharp W_L)^k, \quad k\ge 1,\]
where $W_L$ is the set of sites in $S(L)$ connected to $\partial S(L)$ by a $p$-open path. $\C_{19}$ is a constant uniform in $k, L$ and $p$. 

When $n \le L(p)$, the proof in  \cite{nguyen} is easily adapted to the variables $\sharp C_n(p)$. We define the event
\[A(x) = \{x \text{ has two disjoint open connections to } \partial S(n) \}.\]
The idea is to write
\begin{align}
\nonumber \mathbb{E}(\sharp C_n(p))^{k+1} &=\sum_{x_1,\ldots, x_{k+1}\in S(n)} \mathbb{P}_{p}(\cap_{i=1}^{k} A(x_i), A(x_{k+1}))\\
\label{eq: nguyen-triple-sum} &= \sum_{l=1}^{n/2} \sum_{x_1,\ldots,x_k\in S(n)}\sum_{x_{k+1}\in R_l \cap S(n)} \mathbb{P}_{p}\left(\cap_{i=1}^{k}A(x_i), A(x_{k+1})\right),
\end{align}
where we have set
\[R_l= R_l(x_1,\ldots x_k) = \{x: \operatorname{dist}_{\infty}(x,\{x_1,\ldots,x_k\}\cup \partial S(n)) = l\}.\]
Letting 
\[\mathrm{Circ}_{k,l} = \left\{ \begin{gathered}\text{there exists an open circuit}\\ \text{ around } x_{k+1} \text{ in } S(l,x_{k+1})\setminus S(l/2,x_{k+1}) \end{gathered} \right\}, \]
we have $\mathbb{P}(\mathrm{Circ}_{k,l})\ge \C_{20} >0$ uniformly in $l$ and $n$ (even for $l> L(p)$). By the FKG inequality:
\begin{align*}
\mathbb{P}_{p}\left(\cap_{i=1}^{k}A(x_i), \ A(x_{k+1})\right) \le& \frac{1}{\C_{20}}\mathbb{P}_{p}\left(\cap_{i=1}^{k}A(x_i), A(x_{k+1}), \mathrm{Circ}_{k,l}\right)\\
\le& \frac{1}{\C_{20}}\mathbb{P}_{p}\left(\begin{gathered} \cap_{i=1}^{k}\tilde{A}(x_i,l), x_{k+1} \text{ has two disjoint} \\ \text{connections to } \partial S(l/2,x_{k+1}) \end{gathered}  \right).
\end{align*}
$\tilde{A}(x_i,l)$ is the event that $x_i$ is connected to $\partial S(n)$ by two disjoint open paths outside of $S(l/2,x_{k+1})$. By independence, the last quantity on the right is bounded, up to a constant, by
\[\mathbb{P}_{p}\left(\cap_{i=1}^{k}A(x_i)\right)\rho(p,l/2).\]
For any $l$, we have:
\[\sharp R_l \lesssim (k+1)\cdot n.\]
Returning to (\ref{eq: nguyen-triple-sum}), we find
\begin{equation}\label{eq: nguyensum}
\mathbb{E}(\sharp C_n(p))^{k+1} \lesssim (k+1) \cdot \mathbb{E}(\sharp C_n(p))^k\cdot n\sum_{l=1}^{n/2} \rho(p,l/2).\end{equation}
If $n \le L(p)$, we have
\[\rho(p,l/2) \asymp \rho(p_c,l/2) \asymp \rho(p_c,l),\]
and the estimate (see the remark concerning (\ref{eq: smoothsum}) above)
\[\sum_{l=1}^{n/2} \rho(p_c,l) \lesssim n\rho(p_c,n) \lesssim n \rho(p,n) \]
leads to the inductive estimate claimed above.

If $n\ge L(p)$, we split the sum as we did in the treatment of the first moment of $\sharp Z_q$:
\begin{align}
\sum_{l=1}^{n/2} \rho(p,l/2) &= \left(\sum_{l=1}^{\lfloor L(p)\rfloor }+\sum_{l=\lfloor L(p)\rfloor+1}^{n/2}\right)\rho(p,l/2)\\
&\lesssim \sum_{l=1}^{\lfloor L(p)\rfloor }\rho(p,l/2) + n\rho(p, L(p) /2) \nonumber \\
&\lesssim n\rho(p,L(p)) + n \rho(p,L(p)/2)
\nonumber \\
&\lesssim n \rho(p, n) \nonumber.
\end{align}
Here we have used that for $L \geq L(p)$, $\rho(p,L) \asymp \rho(p,L(p))$. This follows by a variant of the argument presented in \cite[Section 7.4]{nolin} . This establishes the lemma.
\end{proof}
\end{lemma}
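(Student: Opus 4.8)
The plan is to adapt Nguyen's moment recursion \cite{nguyen}, originally established for the one-arm volume (the number of sites of $S(L)$ connected to $\partial S(L)$ by a single open path), to the two-arm set $C_n(p)$; the only genuinely new points will be to keep every constant uniform in $p\ge p_c$ and to treat the regime $n\ge L(p)$ not covered in \cite{nguyen}. Write $A(x)$ for the (increasing) event that $x$ has two disjoint open connections to $\partial S(n)$, so that
\[
\mathbb{E}(\sharp C_n(p))^{k+1} = \sum_{x_1,\dots,x_{k+1}\in S(n)} \mathbb{P}_p\Big(\bigcap_{i=1}^{k+1} A(x_i)\Big).
\]
The first step is, for each fixed $(x_1,\dots,x_k)$, to split the sum over $x_{k+1}$ according to the scale $l := \operatorname{dist}_\infty\big(x_{k+1},\{x_1,\dots,x_k\}\cup\partial S(n)\big)$, so that $x_{k+1}$ ranges over the shell $R_l(x_1,\dots,x_k)$; then $1\le l\le n$, the box $S(l,x_{k+1})$ lies inside $S(n)$, and it contains none of $x_1,\dots,x_k$.

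The heart of the argument is a decoupling of $x_{k+1}$ from the remaining points at scale $l/2$. Let $\mathrm{Circ}_{k,l}$ be the increasing event that there is an open circuit around $x_{k+1}$ in the annulus $S(l,x_{k+1})\setminus S^\circ(l/2,x_{k+1})$; RSW at $p_c$ together with monotonicity in $p$ gives $\mathbb{P}_p(\mathrm{Circ}_{k,l})\ge \C_{20}>0$ uniformly in the scale and in $p\ge p_c$, in particular with no assumption $l\le L(p)$. By the FKG inequality, $\mathbb{P}_p(\cap_{i=1}^{k+1}A(x_i)) \le \C_{20}^{-1}\,\mathbb{P}_p\big(\cap_{i=1}^{k+1}A(x_i)\cap\mathrm{Circ}_{k,l}\big)$. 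On the latter event I would perform the standard circuit surgery: since each $x_i$ with $i\le k$ lies outside $S(l,x_{k+1})$, its two open arms to $\partial S(n)$ can be rerouted along the open circuit so as to avoid $S^\circ(l/2,x_{k+1})$ while remaining disjoint --- two disjoint arms dipping into the region bounded by the circuit leave non-interleaving pairs of endpoints on it, so they can be pushed onto disjoint sub-arcs --- whereas the two arms of $x_{k+1}$ restrict to two disjoint open connections from $x_{k+1}$ to $\partial S(l/2,x_{k+1})$ realized inside $S(l/2,x_{k+1})$. Writing $\tilde A(x_i,l)$ for the rerouted event ($x_i$ connected to $\partial S(n)$ by two disjoint open paths avoiding $S^\circ(l/2,x_{k+1})$), the events $\tilde A(x_i,l)$ with $i\le k$ depend only on edges outside $S^\circ(l/2,x_{k+1})$ and the $x_{k+1}$-event only on edges inside; independence and $\tilde A(x_i,l)\subseteq A(x_i)$ then yield
\[
\mathbb{P}_p\Big(\bigcap_{i=1}^{k+1}A(x_i)\Big) \le \C_{20}^{-1}\,\mathbb{P}_p\Big(\bigcap_{i=1}^{k}A(x_i)\Big)\,\rho(p,l/2).
\]

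Next I would sum back. Using the elementary bound $\sharp R_l \lesssim (k+1)n$ (a distance-$l$ shell around a single point has $\lesssim l\le n$ sites, and $\partial S(n)$ contributes $\lesssim n$), summing over $x_{k+1}\in R_l$ and then over $1\le l\le n/2$ gives
\[
\mathbb{E}(\sharp C_n(p))^{k+1} \lesssim (k+1)\,n\,\mathbb{E}(\sharp C_n(p))^{k}\sum_{l=1}^{n/2}\rho(p,l/2),
\]
with an absolute implicit constant. It then remains to show $\sum_{l=1}^{n/2}\rho(p,l/2)\lesssim n\,\rho(p,n)$ uniformly in $p\ge p_c$. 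For $n\le L(p)$ I would replace $\rho(p,\cdot)$ by $\rho(p_c,\cdot)$ (comparability of arm probabilities below the correlation length \cite{kesten3}, \cite[Theorem~26]{nolin}) and invoke the standard two-arm summation $\sum_{l\le r}\rho(p_c,l)\lesssim r\,\rho(p_c,r)$ --- equivalently $\rho(p_c,r)/\rho(p_c,s)\lesssim s/r$ for $r\le s$ --- which follows from quasi-multiplicativity and the BK-type argument already quoted for \eqref{eq:twoarms}. For $n\ge L(p)$ I would split the sum at $L(p)$: the contribution of $l\le L(p)$ is $\lesssim L(p)\,\rho(p,L(p))\le n\,\rho(p,n)$ as above, and for $l>L(p)$ one uses that $\rho(p,\cdot)$ stabilizes above the correlation length, $\rho(p,l)\asymp\rho(p,L(p))\asymp\rho(p,n)$ (a variant of \cite[Section~7.4]{nolin}), so that contribution is $\lesssim n\,\rho(p,n)$ as well. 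Combining the two regimes produces the claimed recursion with a constant $\C_{18}$ independent of $k$, $n$ and $p$.

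I expect the surgery step to be the most delicate point: making precise that the two rerouted arms of each $x_i$ remain genuinely disjoint, and dealing with the lattice-topological fine points (an arm may make several excursions past the circuit; a ``circuit'' on $\mathbb{Z}^2$ is not literally a Jordan curve). Everything after that containment is bookkeeping that essentially reproduces the first-moment estimate for $\sharp Z_q$ carried out above, together with the two correlation-length case distinctions.
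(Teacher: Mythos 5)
Your proposal follows the paper's proof essentially step for step: the same moment expansion, the same shell decomposition over $R_l$, the same FKG/circuit decoupling at scale $l/2$ yielding the factor $\rho(p,l/2)$, the same bound $\sharp R_l\lesssim (k+1)n$, and the same two-regime summation ($n\le L(p)$ via comparability with $p_c$, $n\ge L(p)$ via stabilization of $\rho(p,\cdot)$ above the correlation length). The only difference is that you spell out the circuit surgery in slightly more detail than the paper does; the argument is correct as written.
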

Using Lemma~\ref{nguyenslemma}, induction and the fact that $q \leq 2m/(DM\log 2m)$ for large $m$, we obtain
\begin{equation}\label{eq: inductiveboundforn}  \mathbb{E} (\sharp Z_q(\mathbf{j},j))^k \le k!(\C_{18} q^2\rho(p_{2m}(j),q))^k \le k! (\C_{18} q^2 \rho(p_c,q))^k.
\end{equation}

Thus, arguing as for (\ref{eq: small-moment-sum}):
\begin{multline*}
\mathbb{E}((\sharp (\tilde{B}(n)\cap F(q,\mathbf{j}))^k )\lesssim k! ~q^{2k}(\rho(p_c,q))^k (\C_{18})^k\\ \times \left(\frac{\exp(-M\C_0\log 2m)}{k!(\C_{18}\rho(p_c,q))^k}+ \sum_{j=2}^{\log^* 2m}(\log^{(j-1)} 2m)^{-M\C_0} +1\right).
\end{multline*}
Using the value of $q$ from \eqref{eq: qdef} and choosing $M= \eta_2 k/2\C_0$, we use (\ref{eq:logsconverge}) to get (\ref{eq-small-moments}) in the case where $\mathbf{j}$ is such that $F(q,\mathbf{j})\subset S(3m)\setminus S(m)^\circ$. For a general $\mathbf{j} \in \mathcal{J}$, the intersection
\[F(q,\mathbf{j}) \cap (S(3m)\setminus S(m)^\circ)\]
is a union of at most two rectangles with side lengths $r_1$ and $r_2$, $r_i\le 3q/2$. Repeating the arguments above, we see that the size of the intersection of each of these rectangles with the backbone $\tilde{B}(n)$ enjoys the moment bounds (\ref{eq-small-moments}), with $q^2\rho(p_c,q)$ replaced by $r_1r_2\rho(p_c,r)$, with $r=\max\{r_1,r_2\}$. Using (\ref{eq:twoarms}), we obtain the upper bound $r^2\rho(p_c,r)$. For the higher moments, moment bounds of the form (\ref{eq-small-moments}) with a larger (but still uniform in $\max(r_1,r_2)\le 3q/2$) constant $\C_{16}$ are valid. (\ref{eq-backbone-bound}) now follows by a union bound.

The proof of (\ref{eq-big-moments}) follows a very similar pattern to the above. Instead of $Z_q(\mathbf{j},j)$, we consider the sets 
\[Z_n(j), \quad 1\le j \le \log^*2m\] 
of points of $S(3m)\setminus S(m)$ with two disjoint $p_{2m}(j)$-open connections to $\partial S(m) \cup \partial S(3m)$. Repeating the steps for the case $k=1$ gives \eqref{eq-big-moments} for the first moment. In the case of higher moments, we need to modify inequality \eqref{eq: inductiveboundforn}, replacing it with
\[
\mathbb{E}(\sharp Z_n(j))^k \leq k!(\C_{18}n^2\rho(p_{2m}(j),n))^k \leq k!(M \C_{21} n^2 \rho(p_c,n) \log^{(j)}2m)^k
\]
for some $\C_{21}$. Decomposing as before over the events $(H_{2m}(j))$ and choosing $M\geq (k+1)/\C_0$ leads to \eqref{eq-big-moments}.

\subsection{Proof of (\ref{eq-crossing-bound})}
In its general outline, the proof is similar to that of (3.24) in \cite{kesten}, with some parameters chosen differently because we wish to bound only logarithmic deviations from the mean. However, the estimates in \cite{kesten} are carried out for critical percolation ($p=p_c$), and the proof of the initial estimate (\ref{eq:pre-peierls-bound}) below in the supercritical case introduces an additional technical difficulty. 

As explained in Section \ref{section-invasion}, the entire (finite) $p_c$-open cluster of any site in the IPC also belongs to the IPC. Thus, for any crossing $r$ (in the IPC) of $F(\mathbf{j},q)\setminus D(\mathbf{j},q)$, the number of sites in $F(\mathbf{j},q)\setminus D(\mathbf{j},q)$ connected to $r$ by $p_c$-open paths provides a lower bound for the quantity in (\ref{eq-crossing-bound}).

The starting point is the following:
\begin{lemma}[\cite{kesten2}]
Let $r$ be a deterministic path crossing $S(27n)\setminus (S(n))^\circ$.
Let  $Z(n)$ be the set of sites $p_c$-connected to $r$ inside this annulus. We have the lower bound:
\[\mathbb{P}_{p_c}(\sharp Z(n) \ge \C_{22} n^2\pi(n))\ge \C_{23},\]
for some constants $\C_{22},\C_{23}>0$ independent of $n$.
\end{lemma}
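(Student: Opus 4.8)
The plan is a second-moment (Paley--Zygmund) argument, preceded by a localization of the crossing $r$. Following \cite{kesten2}: since $r$ joins $\partial S(n)$ to $\partial S(27n)$, it meets $\partial S(13n)$ at some point $\xi$; following $r$ backwards from $\xi$ until it last leaves the interior of $S(2n,\xi)$ produces a sub-path $\gamma\subseteq r$ which is $p_c$-open, connected, contained in $S(2n,\xi)$, and joins $\xi$ to $\partial S(2n,\xi)$. Moreover $S(2n,\xi)\subseteq S(27n)\setminus (S(n))^\circ$, so the cluster of $\gamma$ inside $S(2n,\xi)$ is contained in $Z(n)$. It therefore suffices to prove the bound for
\[
M=\sharp\{x\in S(n/2,\xi): x \text{ is } p_c\text{-connected to } \gamma \text{ inside } S(2n,\xi)\},
\]
for which one applies $\mathbb{P}_{p_c}(M\ge \tfrac12 \mathbb{E} M)\ge (\mathbb{E} M)^2/(4\,\mathbb{E} M^2)$.

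For the first moment, fix $x\in S(n/2,\xi)$. A standard RSW-and-FKG gluing argument (see \cite{nolin}) shows that with probability $\gtrsim \pi(n)$ there is a $p_c$-open circuit $\mathcal{C}$ in the annulus $S(2n,\xi)\setminus (S(3n/2,\xi))^\circ$ which is connected to $x$ inside $S(2n,\xi)$: one combines the one-arm event $\{x\leftrightarrow \partial S(3n/2,\xi)\}$, of probability $\asymp\pi(n)$ by quasi-multiplicativity since $\operatorname{dist}(x,\partial S(3n/2,\xi))\asymp n$, with the circuit event and with the connection between them. Such a circuit separates $\xi$ from $\partial S(2n,\xi)$ and hence is crossed by $\gamma$, forcing $x\leftrightarrow\gamma$. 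Summing over $x\in S(n/2,\xi)$ gives $\mathbb{E} M\gtrsim n^2\pi(n)$.

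For the second moment, the key observation is that $\gamma$ is itself a $p_c$-open connected set, so every site connected to it lies in a single cluster; thus for $x,y\in S(n/2,\xi)$ one has
\[
\{x\leftrightarrow\gamma\}\cap\{y\leftrightarrow\gamma\}=\{x\leftrightarrow\gamma\}\cap\{x\leftrightarrow y\}\subseteq\{x\leftrightarrow y\}\cap\{x\leftrightarrow\partial S(2n,\xi)\},
\]
the last inclusion because $\gamma$ meets $\partial S(2n,\xi)$. Splitting this event between a bounded-scale neighbourhood of $\{x,y\}$ and its complement and applying quasi-multiplicativity for the one-arm (together with $\mathbb{P}_{p_c}(x\leftrightarrow y)\asymp\pi(|x-y|)^2$) yields $\mathbb{P}_{p_c}(x\leftrightarrow\gamma,\ y\leftrightarrow\gamma \text{ in } S(2n,\xi))\lesssim \pi(|x-y|)\,\pi(n)$. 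Summing over $x,y\in S(n/2,\xi)$ and using the standard consequence $\sum_{2^k\le n}2^{2k}\pi(2^k)\asymp n^2\pi(n)$ of the a priori power-law bounds on $\pi$, one obtains $\mathbb{E} M^2\lesssim n^4\pi(n)^2\asymp (\mathbb{E} M)^2$. Paley--Zygmund then gives $\mathbb{P}_{p_c}(\sharp Z(n)\ge \C_{22}\,n^2\pi(n))\ge \C_{23}$ for suitable constants, proving the lemma.

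The main obstacle is the second-moment estimate: because one must connect to a long path rather than to a fixed point, a naive bound on $\mathbb{P}_{p_c}(x\leftrightarrow\gamma,\ y\leftrightarrow\gamma)$ would be far too large (of order $\pi(|x-y|)$, not $\pi(|x-y|)\pi(n)$), and it is the ``single cluster'' observation together with the uniform-in-$|x-y|$ quasi-multiplicativity splitting that controls the correlations. The first-moment gluing and the dyadic summation are routine and are only sketched above.
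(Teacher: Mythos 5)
There is a genuine gap at the heart of your second-moment bound. The lemma's hypothesis is that $r$ is a \emph{deterministic} lattice path; nothing is assumed about the status of its edges. Your localized sub-path $\gamma\subseteq r$ is therefore not a $p_c$-open connected set, and the identity $\{x\leftrightarrow\gamma\}\cap\{y\leftrightarrow\gamma\}=\{x\leftrightarrow\gamma\}\cap\{x\leftrightarrow y\}$ --- the ``single cluster'' observation on which your estimate $\mathbb{P}_{p_c}(x\leftrightarrow\gamma,\ y\leftrightarrow\gamma)\lesssim\pi(|x-y|)\pi(n)$ entirely rests --- is false in general: $x$ and $y$ may be attached to distant vertices of $\gamma$ by disjoint open clusters, in which case the probability is of order $\pi(\operatorname{dist}(x,\gamma))\pi(\operatorname{dist}(y,\gamma))$ with no factor coupling $x$ to $y$, and the uniform-in-$\gamma$ bound $\mathbb{E}M^2\lesssim(\mathbb{E}M)^2$ no longer follows from your computation. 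Nor can you simply add the hypothesis that $r$ is open: the lemma must apply to arbitrary deterministic paths because in the application (the conditional estimate (\ref{eq: oneoverlogdelta})) it is invoked given the sigma-field $\Sigma_{r''}$ generated by the crossing and the region below it, so the openness of the crossing cannot be recycled into unconditional moment computations. Your first-moment step, by contrast, is sound, precisely because it only uses that an open circuit in an annulus separating $\xi$ from $\partial S(2n,\xi)$ must share a vertex with $\gamma$ --- a purely planar fact valid for any lattice path.

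The missing idea is Kesten's device from (56) of \cite{kesten2}, which the paper follows: instead of counting sites connected to $r$ directly, count the sites $Y(n)$ of the middle annulus $S(9n)\setminus S(3n)^\circ$ connected to $p_c$-open circuits in \emph{both} $S(3n)\setminus S(n)^\circ$ and $S(27n)\setminus S(9n)^\circ$. Any crossing $r$ of $S(27n)\setminus S(n)^\circ$ intersects every such circuit, so $Y(n)\le\sharp Z(n)$ deterministically, while $Y(n)$ itself makes no reference to $r$ or to the status of its edges. Its first moment is $\gtrsim n^2\pi(n)$ by exactly the FKG/RSW gluing you describe, and its second moment is controlled by exactly the quasi-multiplicativity splitting you attempt --- but now legitimately, because the connected open object shared by $x$ and $y$ is the (genuinely open) circuit rather than the deterministic path, so two counted sites necessarily lie in one open cluster and the bound $\pi(|x-y|)\pi(n)$ follows. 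In short, your moment calculus is the right one, but it must be applied to the circuit-based count, not to connections to $\gamma$.
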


The proof is essentially that of (56) in \cite{kesten2}. Kesten's idea is to compute the first and second moments of the number $Y(n)$ of sites in $S(9n)\setminus (S(3n))^\circ$ connected to open circuits in $S(3n)\setminus (S(n))^\circ$ and $S(27n) \setminus (S(9n))^\circ$ (and thus to $r$) and use the Harris-FKG inequality and the second moment method.

Fix some $\delta > 0$ (to be chosen later). We first show that, for any $0<t<q$ (entailing in particular $t < L(p_n(1))$ by \eqref{eq: twoarmbound}), and any coordinates $\mathbf{v}=(v_1,v_2)$ such that 
\[T(\mathbf{v})=[-t+v_1,t+v_1]\times [-t+v_2,4t+v_2]\subset S(3m)\setminus (S(m))^\circ,\]
we have, for some constants $\C_{24}$, $\C_{25}>0$,

\begin{multline}
\label{eq:pre-peierls-bound}
\mathbb{P}\left(\begin{gathered}\exists \text{ a } p_n(1)\text{-open crossing } r  \text{ of } J(\mathbf{v}) = [v_1,t+v_1]\times [v_2,3t+v_2] \\\text{such that }  \sharp \{x \in T(\mathbf{v}) :x \xrightarrow{p_c} r \text{ in } T(\mathbf{v})\} \le \C_{24} t^2\pi(t)/(\log t)^\delta \end{gathered}\right)\\ \lesssim \frac{1}{(\log t)^{\C_{25}}}.
\end{multline}

\subsection{Proof of (\ref{eq:pre-peierls-bound})}
For any crossing $r$ of $J(\mathbf{v})$, let
\[Z(T(\mathbf{v}),r) = \{x \in T(\mathbf{v}) :x \text{ is connected to } r \text{ in } T(\mathbf{v}) \text{ by a } p_c\text{-open path}\}.\]
The probability on the left of (\ref{eq:pre-peierls-bound}) equals
\begin{multline}
\label{eq:crossings-union-bound}
\mathbb{P}\left(\exists \text{ a } p_n(1)\text{-open crossing } r: \ \sharp Z(T(\mathbf{v}),r)\le \frac{\C_{24} t^2 \pi(t)}{(\log t)^\delta} \right) \\
\le \mathbb{P}\left(\exists \text{ a } p_c\text{-open crossing } r': \ \sharp Z(T(\mathbf{v}),r')\le \frac{\C_{24} t^2 \pi(t)}{(\log t)^\delta}\right)\\
+\mathbb{P}\left(\begin{gathered} \exists \text{ a }p_n(1)\text{-open crossing } r \text{ such that } r\\ \text{ intersects no } p_c\text{-open crossing of } J(\mathbf{v})\end{gathered}\right).
\end{multline}
The precise meaning of ``$r$ intersects no $p_c$-open crossing of $J(\mathbf{v})$'' is that no site in $J(\mathbf{v})$ is a common endpoint of an edge in $r$ and an edge in some horizontal $p_c$-open crossing of $J(\mathbf{v})$. In particular, $r$ is edge-disjoint from all $p_c$-open crossings.

Both terms on the right in (\ref{eq:crossings-union-bound}) will be bounded, up to a constant factor, by $(\log t)^{-\delta \C_{25}}$. We begin by estimating the first term in (\ref{eq:crossings-union-bound}). For any crossing lattice path $r'$ of $J(\mathbf{v})$, let $J^-(r')$ be the set of edges with an endpoint that can be connected to $[v_1,v_1+t] \times \{v_2\}$ by a path in $J(\mathbf{v})$ that does not touch $r'$ (below $r'$). Note that $J^-(r')$ may include edges not entirely contained in $J(\mathbf{v})$. The lowest $p_c$-open crossing  $R_1$ of $J=J(\mathbf{v})$ is defined as the horizontal crossing of the rectangle by $p_c$-open edges such that the component $J^-(R_1)$, is minimal. $R_k$ is defined inductively as the lowest crossing of $J\setminus (J^{-}(R_{k-1})\cup R_{k-1})$ (defined analogously -- see \cite[Prop.~2.3]{kestenbook} for the existence of $R_k$ and precise definitions). For a given (lattice path) crossing $r'$ of $J(\mathbf{v})$, write $\Sigma_{r'}$ for the sigma algebra generated by the status of edges in $r' \cup J^-(r')$. We define $K$ to be the maximal $k$ such that $R_k$ exists. The veracity of the following string of inequalities is then evident:
\begin{align}
&\mathbb{P}\left(\exists \text{ a } \ p_c\text{-open crossing } r': \ \sharp Z(T(\mathbf{v}),r')\le \frac{\C_{24} t^2 \pi(t)}{(\log t)^\delta}\right) \nonumber \\
\le&\, \sum_{k\ge 1} \mathbb{P}\left(\ \sharp Z(T(\mathbf{v}),R_k)\le \frac{\C_{24} t^2 \pi(t)}{(\log t)^\delta} ; K \ge k \right) \nonumber \\
\le& \sum_{k\ge 1} \sum_{r''} \mathbb{E} \left(\mathbb{P}(\ \sharp Z(T(\mathbf{v}),R_k)\le \C_{24} t^2 \pi(t)/(\log t)^\delta \mid \Sigma_{r''}); R_k = r'', K\ge k)\right) \label{eq: doublesum}.
\end{align}
On $\{R_k = r'', K\ge k\}$, we have the following uniform estimate for the conditional probability given $\Sigma_{r''}$:
\begin{equation}
\label{eq: oneoverlogdelta}
\mathbb{P}(\sharp Z(T(\mathbf{v}),r'')\le \C_{24} t^2 \pi(t)/(\log t)^\delta \mid \Sigma_{r''}) \lesssim \frac{1}{(\log t)^{\C_{25}}}.
\end{equation}
To see this, consider the left endpoint $l_{r''}$ of the crossing $r''$, and annuli 
\[A(l_{r''},3^k) = S(l_{r''},3\cdot 3^k)\setminus S(l_{r''},3^k), \quad \frac{t}{(\log t)^{\delta/2}}\ \le 3^k\le t.\]
For $3^{3j} \leq t/27$, the existence of circuits $C'_j$ around $l_{r''}$ in $A(l_{r''},3^{3j})$ and $C''_j$ in $A(l_{r''},3^{3j+2})$, all of whose edges outside $J^-(r'')$ are $p_c$-open implies that any site in $A(l_{r''},3^{3j+1}) \cap ([-t,0)\times \mathbb{R})$ connected to
\[\partial S(l_{r''},3^{3j}) \cup \partial S(l_{r''},3^{3j+3})\]
is $p_c$-connected to the crossing $r''$. Thus, using the Harris-FKG inequality, independence of the edge configurations in $J^-(r'')$ and $[-t,0)\times \mathbb{R}$ and the second moment method as in the discussion preceding (\ref{eq:pre-peierls-bound}), there exist constants $\C_{24}$, $\C_{26}$, such that for each $j$ with $ t/(\log t)^{\delta/2} \le 3^{3j} \le t/27$:
\begin{align*}
 &\mathbb{P}\left(\sharp Z(T(\mathbf{v}),r'')\le \C_{24} t^2\pi(t) /(\log t)^\delta \mid \Sigma_{r''} \right) \\
 \le&\  \mathbb{P}\left(\sharp \{x\in A(l_{r''},3^{3j+1})\cap ([-t,0)\times \mathbb{R}) : x \xrightarrow{p_c} r'' \}\le \C_{24} 3^{6j}\pi(3^{3j})\mid \Sigma_{r''} \right)\\
 \le &\  1-\C_{26}.
\end{align*}
There are $(\delta/(6\log 3))\log\log t +O(1)$ admissible indices $j$, and so by independence of the configuration in the different annuli, we find
\[
\mathbb{P}\left(\sharp Z(T(\mathbf{v}),r'')\le \C_{24} t^2/(\log t)^\delta \mid \Sigma_{r''}\right)
\lesssim (1-\C_{26})^{(\delta/(6\log 3))\log\log t},
\]
which is the same as (\ref{eq: oneoverlogdelta}).

Returning to the double sum of \eqref{eq: doublesum}:
\begin{align}
&\mathbb{P}\left(\exists\  \text{a } p_c\text{-open crossing } r : \ \sharp Z(T(\mathbf{v}),r)\le \frac{\C_{22} t^2 \pi(t)}{(\log t)^\delta}\right) \nonumber \\ 
& \lesssim \sum_{k\ge 1} \sum_r \frac{1}{(\log t)^{\C_{25}}} \mathbb{P} \left( R_k = r, K\ge k\right) \nonumber \\
&= \frac{1}{(\log t)^{\C_{25}}} \sum_{k\ge 1}  \mathbb{P}\left( K\ge k\right) \label{eq: k-sum}
\end{align}
By the Russo-Seymour-Welsh method, the $\mathbb{P}_{p_c}$ probability of a dual vertical crossing of $J(\mathbf{v})$ is bounded below by some $\epsilon >0$. Thus, by disjointness of the $R_k$'s and the BK inequality,
\[
\mathbb{P}\left( K\ge k\right) \le \mathbb{P}(\exists\, k \text{ disjoint $p_c$-open crossings of } J(\mathbf{v})) \le (1-\epsilon)^k.
\]
This allows us to bound the sum in (\ref{eq: k-sum}) by $C/\epsilon$.

We now estimate the second term on the right in (\ref{eq:crossings-union-bound}). Denote by $\Xi$ the event that there exists a $p_n(1)$-open crossing $r$ of $J(\mathbf{v})$ such that $r$ intersects no $p_c$-open crossing of $J(\mathbf{v})$. For any $K_0>0$, we have
\[ \mathbb{P}(\Xi) \le \mathbb{P}(\Xi, K \le K_0)+\mathbb{P}(K> K_0).\]
As previously, $K$ denotes the maximal number of disjoint $p_c$-open crossings of $J(\mathbf{v})$. We will choose $K_0=c\log \log n$, so as to give the following bound for the second term above:
\[\mathbb{P}(K> K_0)\lesssim \exp(-\C_{27} \log\log n) = (\log n)^{-\C_{27}},\]
where the constant $\C_{27}$ is a constant such that $\C_{27}\ge \C_{25}$.

For the first term, we have the union bound
\[\mathbb{P}(\Xi, K \le K_0) \le \sum_{k=0}^{\lceil c \log\log n\rceil} \mathbb{P}(\Xi, K=k).\]
It will be shown below (see Lemma \ref{crossing-lemma}) that there is a constant $\C_{28}$ such that, for any $\mathbf{v}$ with $T(\mathbf{v})\subset S(3m)\setminus (S(m))^\circ$, 
\begin{equation} 
\label{eq: fourarms}
\mathbb{P}(\Xi, K=k) \le (\C_{28}\log t)^{2k} (p_n(1)-p_c)\cdot t^2 \cdot \pi_4(t),
\end{equation}
where $ \pi_4(t)= \pi_4(t,p_c)$ is the ``alternating 4-arm probability,'' associated to the event that $\langle \mathbf{0},\mathbf{e}_1\rangle$ is connected to $\partial S(t)$ by two disjoint $p_c$-open paths and its dual edge is connected to $\partial S(t)$ by two disjoint $p_c$-closed dual paths. Thus
\begin{align}
\mathbb{P}(\Xi, K\le K_0) &\lesssim (\log \log n) \exp(2K_0\log(\C_{28}\log t)) (p_n(1)-p_c)\cdot t^2 \cdot \pi_4(t) \nonumber \\
&\le \exp\left(\C_{29} (\log \log n)^2\right) \cdot (p_n(1)-p_c)\cdot t^2 \cdot \pi_4(t), \label{eq: bound-for-pgek}
\end{align}
for a constant $\C_{29}$. The factor $(p_n(1)-p_c)\cdot t^2 \cdot \pi_4(t)$ is $O(n^{-c}$). Indeed, it was shown in \cite{kesten3} that, uniformly for $p>p_c$ sufficiently close to $p_c$:
\begin{equation}
\label{eq: nearcritical}
L(p)^2  \pi_4(L(p),p_c)(p-p_c) \asymp 1.
\end{equation}
Applying this to $p=p_t(1)$, and using (\ref{eq:l_p}) and $\pi_4(t/(M\log t)) \asymp \pi_4(t/\log t)$ \cite[Proposition~12]{nolin}, we find, for $t$ large enough:
\[\frac{t^2}{(\log t)^2} \pi_4(t/\log t) \cdot (p_t(1)-p_c)\asymp 1.\]
Thus we have
\[
(p_n(1)-p_c)\cdot t^2 \cdot \pi_4(t) \lesssim \frac{p_n(1)-p_c}{p_t(1)-p_c} \cdot  (\log t)^2.
\]
Here we have used $\pi_4(t) \leq \pi_4(t/\log t)$. Using (\ref{eq: nearcritical}) again, we have:
\[\frac{p_n(1)-p_c}{p_t(1)-p_c} \asymp \frac{L(p_t(1))^2}{L(p_n(1))^2} \cdot \frac{ \pi_4(L(p_t(1))}{ \pi_4(L(p_n(1)))}.\]
By quasimultiplicativity \cite[Proposition~12]{nolin}:
\[ \frac{ \pi_4(L(p_t(1)))}{ \pi_4(L(p_n(1)))} \asymp \frac{1}{ \pi_4\left(L(p_t(1)),L(p_n(1))  \right) },\]
where $\pi_4(n,N)=\pi_4(n,N;p_c)$ is the probability that there are four arms of alternating occupation status connecting $\partial S(n)$ to $\partial S(N)$ in $S(N)\setminus S(n)^\circ$.
Using Reimer's inequality \cite{reimer} and the (exact) scaling of the 5-arm exponent (see \cite[Theorem~23]{nolin} or \cite{werner}), we have:
\begin{align*}
\left(\frac{L(p_t(1))}{L(p_n(1))}\right)^2 &\asymp  \pi_5\left(L(p_t(1)),L(p_n(1))  \right)\\
 &\lesssim \pi\left(L(p_t(1)),L(p_n(1))\right)\cdot \pi_4\left(L(p_t(1)),L(p_n(1))\right).
\end{align*}
Here, $\pi(n,N)$ is the one-arm probability, that $\partial S(n)$ is connected to $\partial S(N)$ by an open path. Since the one-arm probability satisfies the power-type upper bound $\pi(n)\lesssim n^{-\eta_1}$ for some $\eta_1 \leq 1/2$ (apply the BK inequality to the bound on $\eta_2$ in \eqref{eq: twoarmbound}), we find that $(p_n(1)-p_c)\cdot t^2 \cdot \pi_4(t)$ is bounded, up to a constant, by
\begin{equation}
\label{eq: powerbound}
(\log t)^2 \left(\frac{L(p_t(1))}{L(p_n(1))}\right)^{\eta_1} \lesssim (\log n)^2 \left(\frac{t}{n}\right)^{\eta_1}.
\end{equation}
Since we assume $t< q$, and $q=o(n^{1/2})$ (see (\ref{eq: twoarmbound})), we find 
\[(p_n(1)-p_c)\cdot t^2 \cdot (\log t)^{2} \cdot \pi_4(t) = O(n^{-c}),\]
for some $c>0$. Returning to (\ref{eq: bound-for-pgek}), we have the bound:
\[\mathbb{P}(\Xi, K\le K_0) \lesssim n^{-c/2}.\]

It remains to prove (\ref{eq: fourarms}). This is done in Lemma \ref{summation-lemma} below. Before proceeding, let us introduce a definition: A \emph{$p_c$-closed arm with $k$ defects} is a path of dual edges, all of which except for $k$ of them are $p_c$-closed. The proof of Lemma \ref{summation-lemma} depends on the following:
\begin{lemma}\label{crossing-lemma}
Let $\Xi$ be the event that there exists a $p_n(1)$-open crossing $r$ of $J(\mathbf{v})$ such that $r$ intersects no $p_c$-open crossing of $J(\mathbf{v})$, and $K$ be the maximal number of horizontal $p_c$-open crossings of $J(\mathbf{v})$. Suppose $K=k$; then there exists an edge $e\in J(\mathbf{v})$ such that \begin{enumerate}
\item $e$ has two disjoint $p_n(1)$-open arms to $\{v_1\}\times [v_2,v_2+3t]$ (the left side of $J(\mathbf{v})$) and $\{v_1+t\}\times [v_2,v_2+3t]$ (the right side of $J(\mathbf{v})$), respectively.  
\item $e^*$, the dual edge to $e$, has two disjoint $p_c$-closed arms, each with at most $k$ defects to $[v_1,v_1+t]\times \{v_2\}$ (the bottom side of $J(\mathbf{v})$) and to $[v_1,v_1+t]\times \{v_2+3t\}$ (the top side of $J(\mathbf{v})$), respectively.
\item $w(e) \in [p_c,p_n(1)]$.
\end{enumerate}
\begin{proof}
On the event $\{K=k\}$, Menger's theorem \cite[Section~3.3]{diestel} implies that there is a dual path $s$ joining the top of $J(\mathbf{v})$ to the bottom, all of whose edges, with exactly $k$ exceptions, are closed and which moreover does not intersect itself. This path must intersect the horizontal $p_n(1)$-open crossing $r$ \cite[Prop. 2.2]{kestenbook} along a $p_n(1)$-open edge $e$. This edge must then be $p_c$-closed. The dual edge $e^*$, being part of the non-self-intersecting $s$ with $k$ defects, has two dual arms joining it to the top and bottom of $J(\mathbf{v})$. (See Figure~\ref{fig: menger}.) Moreover, the total number of defects on these two arms is $k$. This establishes the lemma.
\end{proof}
\end{lemma}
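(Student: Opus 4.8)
The plan is to combine planar duality with Menger's theorem, following the max-flow--min-cut picture for edge-disjoint crossings of a rectangle. First I would view $J(\mathbf v)$ as a capacitated network in which each $p_c$-open edge has capacity one and each $p_c$-closed edge capacity zero, with flow running from the left side $\{v_1\}\times[v_2,v_2+3t]$ to the right side $\{v_1+t\}\times[v_2,v_2+3t]$. The maximal number of edge-disjoint $p_c$-open horizontal crossings of $J(\mathbf v)$ equals the maximal flow, so on $\{K=k\}$ it equals $k$; by the max-flow--min-cut theorem (Menger's theorem, \cite[Section~3.3]{diestel}) there is then a minimal edge cut of size $k$, every edge of which is $p_c$-open. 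Passing to the planar dual (see \cite[Prop.~2.3]{kestenbook}) turns this cut into a self-avoiding path $s$ of dual edges joining the bottom side of $J(\mathbf v)$ to the top side, whose associated primal edges are $p_c$-closed except for the $k$ edges of the cut, which are its \emph{defects}.

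Next I would extract the edge $e$. On $\Xi$ there is a $p_n(1)$-open horizontal crossing $r$ of $J(\mathbf v)$, and any primal left--right crossing of the rectangle must meet any dual top--bottom path of the rectangle (\cite[Prop.~2.2]{kestenbook}); hence there is a primal edge $e$ on $r$ whose dual edge $e^*$ lies on $s$. Since $e\in r$ we have $w(e)\le p_n(1)$, and removing $e$ from $r$ leaves two disjoint $p_n(1)$-open paths from the endpoints of $e$ to the left and right sides of $J(\mathbf v)$: this is assertion~(1). Removing $e^*$ from $s$ leaves two disjoint self-avoiding dual sub-paths reaching the top and the bottom of $J(\mathbf v)$, between which the $k$ defects of $s$ are distributed; provided $e^*$ is not itself a defect, each sub-path carries at most $k$ defects, which is assertion~(2).

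The crux is assertion~(3), that $e$ is $p_c$-closed, and this is the one step using the defining property of $\Xi$. If $e$ were $p_c$-open, then $e^*$ would be one of the defect edges of $s$, so $e$ would belong to the minimal cut of the first step; but every edge of a minimal cut is saturated and lies on one of the $k$ flow paths, i.e.\ there would be a $p_c$-open left--right crossing $r'$ of $J(\mathbf v)$ through $e$. Then $r'$ is a $p_c$-open crossing of $J(\mathbf v)$ sharing the edge $e$ with $r$, contradicting the hypothesis defining $\Xi$ that $r$ meets no $p_c$-open crossing of $J(\mathbf v)$. Hence $w(e)>p_c$, so $w(e)\in[p_c,p_n(1)]$; in particular $e^*$ is not a defect of $s$, which completes the verification of the defect count in~(2).

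I expect the only real obstacle to be book-keeping: checking that the minimal dual cut may be taken self-avoiding, that its defects are exactly the primal edges of the cut, and that Kesten's conventions for ``crossings'' make the intersection of $r$ and $s$ occur at an honest edge of $r$; all of these are standard features of planar max-flow--min-cut. The one genuinely substantive point is the contradiction above, where the assumption that the $p_n(1)$-open crossing $r$ avoids every $p_c$-open crossing is converted, through the augmenting-path structure of a minimum cut, into the conclusion that the crossing edge $e$ cannot be $p_c$-open.
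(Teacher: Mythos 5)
Your proof is correct and follows essentially the same route as the paper: Menger's theorem / planar duality produces a self-avoiding dual top--bottom path with $k$ defects, which must cross the $p_n(1)$-open crossing $r$ at an edge $e$, and the definition of $\Xi$ forces $e$ to be $p_c$-closed. Your max-flow--min-cut saturation argument simply makes explicit the step the paper leaves implicit (namely, that a $p_c$-open defect edge would lie on one of the $k$ disjoint $p_c$-open crossings, contradicting edge-disjointness of $r$ from all such crossings).
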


\begin{figure}[h]
\centering
\includegraphics[scale=0.4]{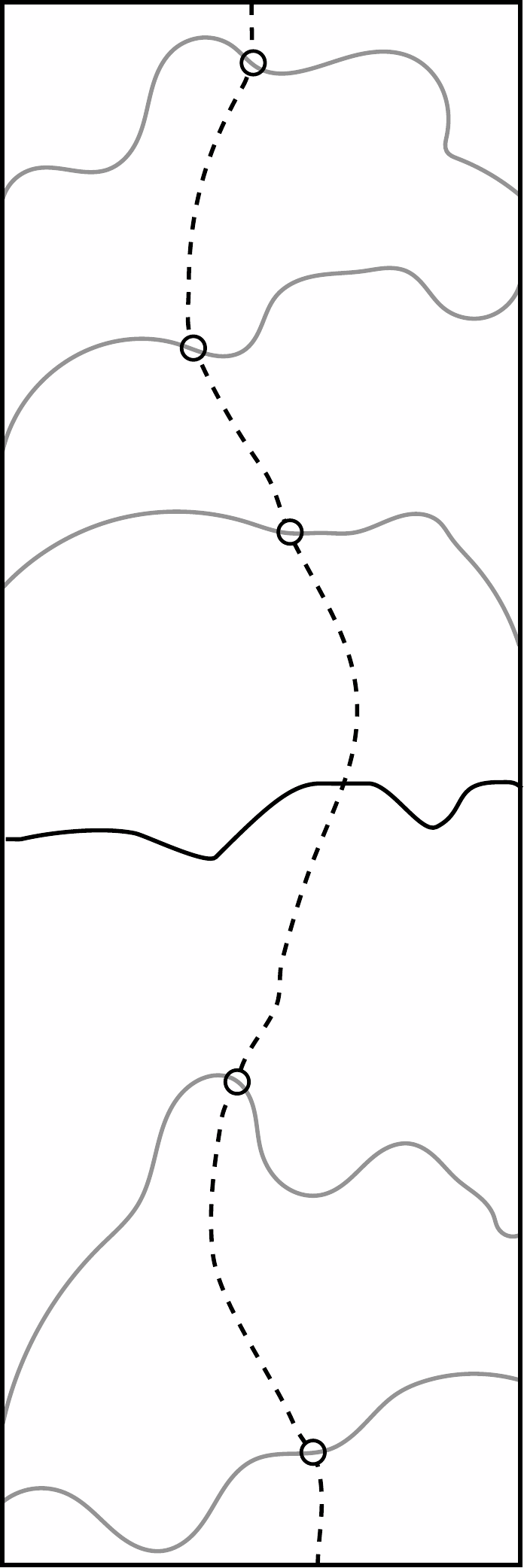}
\caption{Depiction of the application of Menger's theorem in the proof of Lemma~\ref{crossing-lemma}. The dotted path has $k$ defects, shown as empty circles. The solid black path represents a $p_n(1)$-open crossing and the grey paths represent disjoint $p_c$-open crossings.}
\label{fig: menger}
\end{figure}

The proof of (\ref{eq:pre-peierls-bound}) is concluded by the following lemma, which establishes the estimate (\ref{eq: fourarms}):
\begin{lemma}\label{summation-lemma} There is a constant $\C_{28}$ such that, for each $k\ge 1$, the following bound holds:
\begin{equation}
\label{eq: xibound}
\mathbb{P}(\Xi, K=k) \le (\C_{28}\log t)^{2k} (p_n(1)-p_c)\cdot t^2 \cdot \pi_4(t).
\end{equation}
\end{lemma}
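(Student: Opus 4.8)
The plan is to combine Lemma \ref{crossing-lemma} with a union bound over edges and the standard ``four-arm event with defects'' estimate. By Lemma \ref{crossing-lemma}, on $\Xi \cap \{K = k\}$ there is an edge $e \in J(\mathbf{v})$ carrying $w(e)\in[p_c,p_n(1)]$, two disjoint $p_n(1)$-open arms from the endpoints of $e$ to the left and right sides of $J(\mathbf{v})$, and two disjoint $p_c$-closed dual arms from $e^*$ to the top and bottom sides, with at most $k$ defects in total. Writing $G_k(e)$ for this event and noting $\sharp E(J(\mathbf{v}))\le \C_{28}t^2$, a union bound gives
\[\mathbb{P}(\Xi, K = k) \le \sum_{e \in E(J(\mathbf{v}))} \mathbb{P}(G_k(e)),\]
so it suffices to show the right-hand side is at most $(\C_{28}\log t)^{2k}(p_n(1)-p_c)\,t^2\,\pi_4(t)$. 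The first easy move is to peel off the weight of $e$: $\{w(e)\in[p_c,p_n(1)]\}$ has probability $p_n(1)-p_c$ and is independent of all other edge weights, hence of the arm event, so $\mathbb{P}(G_k(e))\le (p_n(1)-p_c)\,\mathbb{P}(A_k(e))$, where $A_k(e)$ is the alternating four-arm event from $e$ to the four sides of $J(\mathbf{v})$ with the two ``open'' arms at level $p_n(1)$, the two ``closed'' dual arms at level $p_c$, and at most $k$ defects on the closed arms.

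The core of the argument is to bound $\sum_e \mathbb{P}(A_k(e))$ in three steps. \emph{Step one: remove the defects.} I would distribute the defects as $k_1$ on the top closed arm and $k_2$ on the bottom one ($k_1+k_2\le k$, at most $(k+1)^2$ choices), and apply the dyadic multiscale estimate of \cite{kesten}: a (near-critical) closed arm across an annulus of modulus at most $\C t$ carrying $m$ defects costs at most $(\C\log t)^m$ relative to a defect-free one, because the $\le m$ defect edges can be removed for free and the resulting $\le m+1$ defect-free sub-arms are recombined by quasimultiplicativity of arm events (valid below $L(p_n(1))$ by \cite{kesten3},\cite{nolin}), the factor $(\C\log t)^m$ arising from the choice of the $\le\log t$ dyadic scales where the defects sit. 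This yields $\mathbb{P}(A_k(e))\le (\C\log t)^{2k}\,\mathbb{P}(\tilde A(e))$, with $\tilde A(e)$ the defect-free alternating four-arm event (open arms at $p_n(1)$, closed arms at $p_c$). \emph{Step two: reduce to criticality.} Since $t<q\le L(p_n(1))$ by \eqref{eq: twoarmbound} and \eqref{eq:l_p}, all relevant scales lie below the correlation length of $p_n(1)$, so by near-critical comparability of polychromatic arm events and Reimer's inequality \cite{reimer}, $\mathbb{P}(\tilde A(e))$ is bounded, up to a universal constant, by the genuinely critical alternating four-arm probability from $e$ to the four sides of $J(\mathbf{v})$. \emph{Step three: sum over $e$.} Grouping the edges of the $t\times 3t$ rectangle $J(\mathbf{v})$ by their $\ell^\infty$-distance profile to the four sides — an edge at distance $\approx 2^i$ from its nearest side supports a genuine four-arm event only up to scale $2^i$, and beyond that only the two or three arms toward the far sides — quasimultiplicativity together with the monotonicity $\pi_{j'}\le\pi_j$ for $j'\ge j$ gives $\sum_e \mathbb{P}(\tilde A(e)) \lesssim t^2\pi_4(t)$, which is the classical bound for the expected number of pivotal edges in a box of side $t$. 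Combining the three steps and absorbing $(k+1)^2$ and the per-scale constants into $(\C_{28}\log t)^{2k}$ (possible since $(k+1)^2\C^{2k}\le\C_{28}^{2k}$ for $\C_{28}$ large) produces \eqref{eq: xibound}.

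The hard part will be Step one in its near-critical, two-parameter form: an edge with weight in $(p_c,p_n(1)]$ is simultaneously $p_n(1)$-open and $p_c$-closed, so the ``open'' and ``closed'' arms of $A_k(e)$ need not be genuinely edge-disjoint, and one must first pass to the critical polychromatic picture — where Reimer's inequality and the usual quasimultiplicativity are available — before one is entitled to count defect locations. Keeping all of the $k$-dependence confined to the factor $(\C_{28}\log t)^{2k}$ forces the order of operations: the defect removal has to be carried out \emph{before} the near-critical comparison, so that the comparison constant (applied only to a defect-free four-arm event) is universal rather than $k$-dependent. Everything else — the union bound, the independence of $w(e)$, and the pivotal count in Step three — is routine.
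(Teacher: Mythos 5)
Your overall architecture coincides with the paper's: Lemma \ref{crossing-lemma}, a union bound over the edges of $J(\mathbf{v})$, independence of $\{w(e)\in[p_c,p_n(1)]\}$ from the arm event (which uses only edges other than $e$) to extract the factor $p_n(1)-p_c$, defect removal at a cost of $(\C\log t)^{2k}$ via the argument of \cite[Prop.~17]{nolin}, and the pivotal-edge count $\sum_e\mathbb{P}(B_e)\lesssim t^2\pi_4(t)$ as in \cite[Lemma~6.2]{werner}. Steps one and three are sound, and the order of operations you settle on at the end (remove defects first, then compare parameters, so the comparison constant applies to a defect-free event and is $k$-independent) is the one the paper actually uses; your earlier sentence suggesting one must pass to criticality before counting defects contradicts this and should be dropped.

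The genuine gap is Step two. The event $\tilde A(e)=A_e(p_n(1),p_c)$ is irreducibly two-parameter: open arms at $p_n(1)$, closed dual arms at $p_c$. Since $p_n(1)$-closed implies $p_c$-closed, this mixed event \emph{contains} both pure events $A_e(p_n(1),p_n(1))$ and $A_e(p_c,p_c)$; consequently the standard near-critical stability of polychromatic arm events below $L(p_n(1))$ (which compares the four-arm probability with all arms at a single parameter $p$ to the one at $p_c$) only yields a \emph{lower} bound on $\mathbb{P}(\tilde A(e))$ in terms of $\pi_4(p_c,t)$, not the upper bound you need, and Reimer's inequality merely decouples the arms into a product that cannot be reassembled into $\pi_4(p_c,t)$. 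The required upper bound $\mathbb{P}(A_e(p_n(1),p_c))\lesssim\mathbb{P}(A_e(p_c,p_c))$ is \cite[Lemma~6.3]{damronetal}, and it is proved by a differential inequality, not by citation of arm stability: Russo's formula in the open-arm parameter expresses $\frac{\mathrm{d}}{\mathrm{d}p}\mathbb{P}(A_e(p,p_c))$ as the sum \eqref{eq: damronetalsum} over auxiliary pivotal edges $e'$, which one bounds by $C\bigl(l^2\pi_4(l)+t^2\pi_4(t)\bigr)\mathbb{P}(A_e(p,p_c))$ after splitting over the distance $|e'_x-e_x|$ as in \eqref{eq: triplesum} (with a separate treatment when $e$ lies within distance $l\ll t$ of the boundary of $J(\mathbf{v})$, which is the case your union bound cannot avoid), and then integrates from $p_c$ to $p_n(1)$ using $L(p)^2\pi_4(L(p))(p-p_c)\asymp 1$. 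This Russo-formula step is the technical core of the lemma and is absent from your proposal; without it, Step two is an assertion rather than a proof.
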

It suffices to estimate the probability that there is an edge in $J(\mathbf{v})$ satisfying the two conditions in Lemma \ref{crossing-lemma}. To that end, we will show that the expected number of such edges in $J(\mathbf{v})$ is bounded by the quantity on the right side of equation (\ref{eq: xibound}). For $e\in J(\mathbf{v})$, let $A^k_e$ be the event that $e$ satisfies the conditions of Lemma \ref{crossing-lemma}. 

The key step is the existence of a constant $\C_{29}$ such that
\begin{equation}\label{eq: ecomp} \mathbb{P}\left(A^k_e \right) \le (\C_{29}\log t)^{2k} (p_n(1)-p_c) \mathbb{P}(B_e),
\end{equation}
where $B_e$ is the event that $e$ has two disjoint $p_c$-open arms joining it to the left and right sides of $J(\mathbf{v})$ respectively, and $e^*$ has two disjoint $p_c$-closed dual arms to the top and bottom sides of $J(\mathbf{v})$. The effect of the arms with defects is to produce the logarithmic factor indicated in the equations above:
\begin{equation}\label{eq: defectbound}
\mathbb{P}\left(A^k_e\right) \le (\C_{29}\log t)^{2k}(p_n(1)-p_c)\mathbb{P}(A_e),
\end{equation}
where $A_e$ is defined analogously to $B_e$ above, except that the open connections are required to be $p_n(1)$-open rather than $p$-open.
 This follows from the argument in \cite[Prop. 17]{nolin}, where it is shown that if $A^d_{j,\sigma}(n)$ denotes the probability that the origin is connected to $\partial S(n)$ by $j$ paths, with $d$ defects in total, whose occupation status is specified by the sequence $\sigma \in \{\text{open},\text{closed}\}^j$, then
 \begin{equation}
 \label{eq: defects}
 \mathbb{P}(A^d_{j,\sigma}(n))\lesssim_d (1+\log n)^d \mathbb{P}(A_{j,\sigma}(n)).
 \end{equation}
 $A_{j,\sigma}(n)$ is the event that there are $j$ arms (without defects) to $\partial S(n)$ (with occupation status as in  $\sigma$). Inspection of the proof in \cite{nolin} reveals that the constant implicit in (\ref{eq: defects}) is of the form $(\C_{29})^d$. Separating the four arms as in \cite{kesten3} verifies that (\ref{eq: defectbound}) holds.
 
 It now remains to show that $\mathbb{P}(A_e) \lesssim \mathbb{P}(B_e)$; that is, that for $n$ sufficiently large, we can change the $p_n(1)$-open arms in the definition of $A_e$ to be $p_c$-open at the cost of a constant probability factor. For edges $e$ at distance $t/2$ from the boundary, this follows immediately from \cite[Lemma 6.3]{damronetal}. We briefly sketch how the proof given there can be adapted to the case where $e$ is close to the boundary. We write $\mathbb{P}(A_e)$ as $\mathbb{P}(A_e(p_n(1),p_c))$, where for $p, q\in [p_c,1)$, $A_e(p,q)$ denotes the event that $e$ has two disjoint $p$-open arms to opposite vertical sides of $J(\mathbf{v})$ and $e^*$ has two disjoint $q$-closed dual arms to the top and bottom of $J(\mathbf{v})$. Using Russo's formula as in \cite[(39)]{damronetal} , we find
\begin{equation}\label{eq: damronetalsum}
 \frac{\mathrm{d}}{\mathrm{d}p}\mathbb{P}(A_e(p,p_c)) =\sum_{e' \neq e}\mathbb{P}(A_e(\cdot,p_c), A_{e'}(\cdot,p),D_{e,e'}(p)).
 \end{equation}
$A_e(\cdot,p_c)$ is the event that $e^*$ has two disjoint $p_c$-closed dual connections to the top and bottom of $J(\mathbf{v})$, and $D_{e,e'}(p)$ is the event that there exist three disjoint $p$-open paths joining, respectively, one vertical side of $J(\mathbf{v})$ to one endpoint of $e$, the other endpoint of $e$ to an endpoint of $e'$, and the other endpoint of $e'$ to the remaining vertical side of $J(\mathbf{v})$. Note that our notation differs somewhat from the one in \cite{damronetal}. For the purposes of illustration, we will henceforth suppose that $e=\langle (v_1+l,v_2+\lfloor 3t/2 \rfloor ),(v_1+l+1,v_2+\lfloor 3t/2 \rfloor ) \rangle$ for some $l < t/4$; that is, $e$ is close to the left side of $J(\mathbf{v})$. The sum on the right of (\ref{eq: damronetalsum}) 
can be rewritten as:
\begin{equation}\label{eq: triplesum}
\left(\sum_{j=1}^{\lfloor l/2 \rfloor} + \sum_{j=\lfloor l/2 \rfloor+1 }^{l} + \sum_{j=l+1}^{3t}\right)\sum_{e':|e'_x-e_x|=j}\mathbb{P}(A_e(\cdot,p_c), A_{e'}(\cdot,p),D_{e,e'}(p)).
\end{equation}
$e_x$ denotes the left endpoint of the edge $e$, if $e$ is a horizontal edge, and its bottom endpoint if $e$ is a vertical edge. The first sum is bounded by
\begin{align}\label{eq: fourfactors} \sum_{j=1}^{\lfloor l/2 \rfloor} \sum_{e':|e'_x-e_x|=j}\mathbb{P}(A_e(\lfloor j/2 \rfloor;p,p_c))\mathbb{P}(A_{e'}(\lfloor j/2 \rfloor;p,p))\\ \times \mathbb{P}(A(\lfloor 3j/2\rfloor,l;p,p_c))\mathbb{P}(T(l,t;p,p_c)) \nonumber \\
\lesssim \sum_{j=1}^{\lfloor l/2 \rfloor}j \mathbb{P}(A_e(\lfloor j/2 \rfloor;p,p_c))\mathbb{P}(A(1,\lfloor j/2 \rfloor;p,p)) \nonumber \\ \times\mathbb{P}(A(\lfloor 3j/2 \rfloor,l;p,p_c))\mathbb{P}(T(l,t;p,p_c)) \nonumber
\end{align}
$A(n,N;p,p_c)$ denotes the probability that there are four arms of alternating occupation status joining $\partial S(n)$ to $\partial S(N)$; $T(l,t;p,p_c)$ is the event that there are two $p_c$-closed arms, as well as a $p$-open arm connecting $\partial S(l)$ to $\partial S(t)$.  Using gluing constructions similar to those in proofs of quasi-multiplicativity, and the fact that we may change the length of any connections involved by constant factors at the cost of constant factors in the probabilities, we have:
\[
\mathbb{P}(A_e(\lfloor j/2 \rfloor;p,p_c))\mathbb{P}(A(\lfloor 3j/2 \rfloor,l;p,p_c))\mathbb{P}(T(l,t;p,p_c)) \asymp \mathbb{P}(A_e(p,p_c)).\]
For $p\le p_n(1)< p_t(1)$, we can use \cite[Theorem~27]{nolin} to assert
\[\mathbb{P}(A(1,\lfloor j/2\rfloor;p,p)) \asymp \mathbb{P}(A(1,\lfloor j/2\rfloor;p_c,p_c)).\]
We can now follow \cite{damronetal} exactly (see equations (42) and (43) and the surrounding discussion) to show 
that the sum in (\ref{eq: fourfactors}) is bounded by:
\[\mathbb{P}(A_e(p,p_c))\cdot l^2 \pi_4(t) \le \mathbb{P}(A_e(p,p_c))\cdot t^2 \pi_4(t).\]
To deal with the second sum in (\ref{eq: triplesum}), we note that when 
\[|e_x-e'_x|\ge \lfloor l/2 \rfloor+1,\] the conjunction of the events $A_e(\cdot,p_c), A_{e'}(\cdot,p)$ and $D_{e,e'}(p)$ appearing in the probability on the right of the equation implies that $e$ has 2 $p$-open, and $e^*$ two $p_c$-closed arms to distance $\lfloor l/4 \rfloor$, that $e'$ has four alternating arms with parameter $p$ to the boundary of the intersection of $S(e'_x,\lfloor l/4 \rfloor)$ with $J(\mathbf{v})$, three of which reach to distance $\lfloor l/4 \rfloor$, and finally that $\partial S(e_x,\lfloor 5l/4 \rfloor)$ has two $p_c$-closed arms to the top and bottom of $J(\mathbf{v})$ and a $p$-open arm to the right side of $J(\mathbf{v})$, and all these connections occur inside $J(\mathbf{v})$. Using these observations, an argument similar to the previous case and a summation analogous to that in the proof of \cite[Lemma 6.2]{werner}, shows that we can estimate (in addition, using the remarks in \cite[Section~4.6]{nolin} to change the $p$-open and closed arms in a half-plane to $p_c$-open and closed arms)
\[\sum_{j=\lfloor l/2 \rfloor+1 }^{l} \sum_{e':|e'_x-e_x|=j}\mathbb{P}(A_e(\cdot,p_c), A_{e'}(\cdot,p),D_{e,e'}(p)) \lesssim \mathbb{P}(A_e(p,p_c)) \cdot l^2\pi_4(l).\]

\begin{figure}[h]
\centering
\includegraphics[scale=0.5]{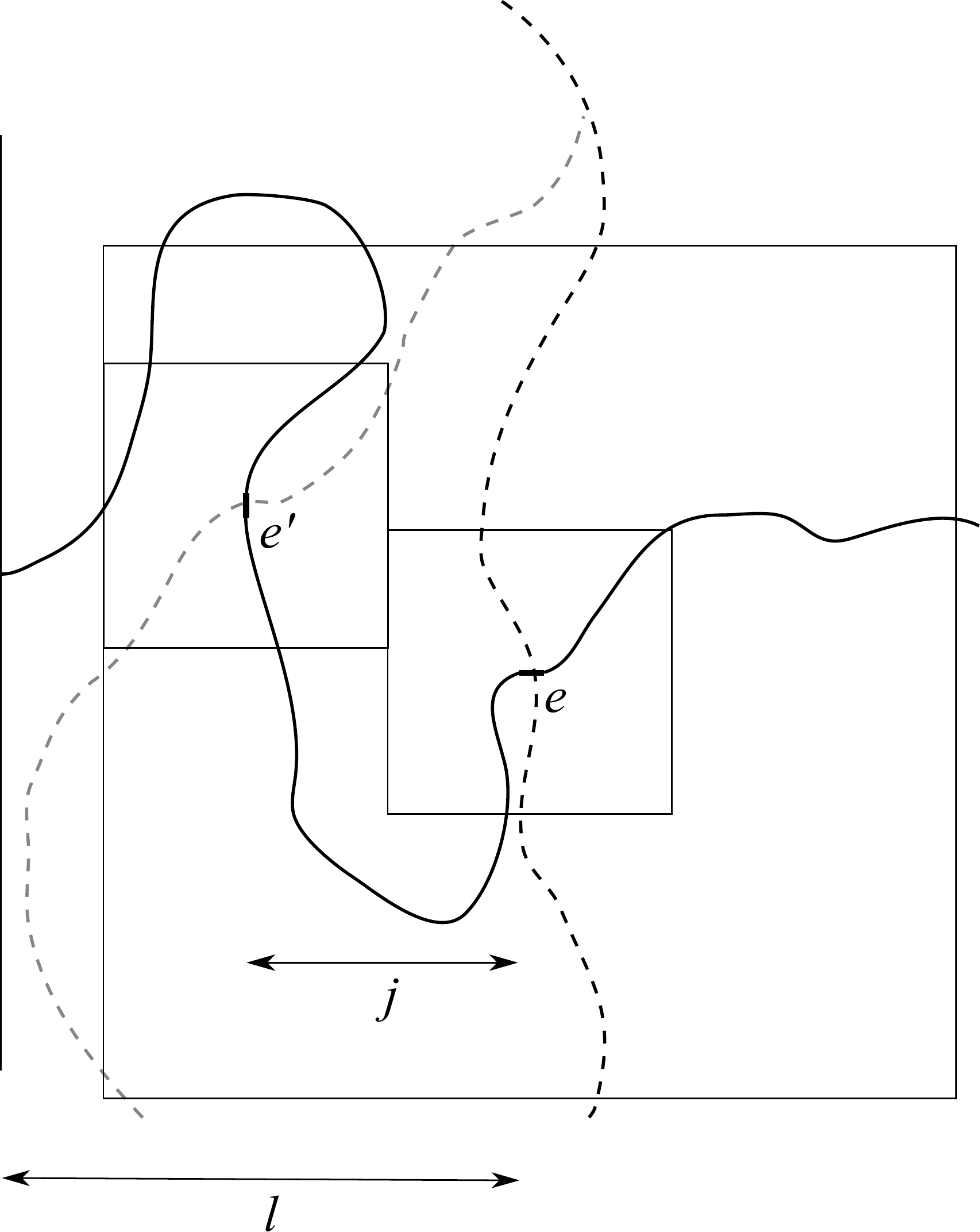}
\caption{Illustration of the setup for management of the second sum in \eqref{eq: triplesum}. The edge $e$ is at distance $l$ from the left boundary of $J(\mathbf{v})$ and the distance between $e'$ and $e$ is $j$, a number between $l/2$ and $l$. The dark dotted curve represents a $p_c$-closed dual path (given by Menger's theorem) and the dark solid curve represents a $p_n(1)$-open path, connecting $e'$ and $e$ to each other and to the left and right sides of $J(\mathbf{v})$. The grey dotted curve represents a $p_n(1)$-closed dual path connecting the edge dual to $e'$ with the top and bottom of $J(\mathbf{v})$.}
\label{fig: boxes}
\end{figure}

Turning to the final sum on the left in (\ref{eq: triplesum}), we can again closely follow \cite{damronetal} to bound this term by
\[t^2\pi_4(t)\cdot \mathbb{P}(A_e(p,p_c)).\]
The estimates outlined above for the left side of (\ref{eq: damronetalsum}) imply
\[\frac{\mathrm{d}}{\mathrm{d}p}\log \mathbb{P}(A_e(p,p_c)) \le C(l^2\pi_4(l)+t^2\pi_4(t)).\]
Integrating this from $p_c$ to $p_n(1)$ and using (\ref{eq: nearcritical}), we find
\[\mathbb{P}(A_e(p_n(1),p_c))\lesssim \mathbb{P}(A_e(p_c,p_c)),\]
which is what we wanted to prove.
We have thus established (\ref{eq: ecomp}); that is, we have shown 
 \[\mathbb{P}(\Xi, K=k)\le \mathbb{E}[\sharp N_k] \lesssim (\C_{30}\log t)^{2k}\cdot(p_n(1)-p_c) \sum_{e\in J(\mathbf{v})}\mathbb{P}(B_e),\]
where $N_k$ is the number of edges satisfying the conditions in Lemma \ref{crossing-lemma}. Note that $B_e$ is equal to the event that the edge $e$ is \emph{pivotal} for the existence of a left-right $p_c$-open crossing of $J(\mathbf{v})$. Following \cite[Lemma 6.2]{werner}, we can show
 \[ \sum_{e\in J(\mathbf{v})}\mathbb{P}(B_e) \lesssim t^2\cdot \pi_4(t).\]
This concludes the proof of the lemma.

\subsection{Final Peierls argument}

We use a block argument and a Peierls argument to upgrade (\ref{eq:pre-peierls-bound}) to (\ref{eq-crossing-bound}). The annulus $F(\mathbf{j},q)\setminus D(\mathbf{j},q)$, centred at
\[v= q\left(j_1+\frac{1}{2}, j_2+\frac{1}{2}\right)\]
is tiled with smaller squares of side length
\[t = \frac{q}{\log q}.\]
The existence of a $p_n(1)$-open crossing of $F(\mathbf{j},q)\setminus D(\mathbf{j},q)$ implies that of a crossing 
\[\bar{r}= (x(0), x(1), \ldots, x(\xi)),\] 
of $S(v,3q/2-5t)\setminus S(v,q/2+5t)$ along edges of $\mathbb{Z}^2$, with $x(0) \in S(v,q/2+5t)$ and $x(\xi)\in S(v,3q/2) \setminus (S(v,3q/2-5t))^\circ$.
The reason for considering this smaller annulus will become clear below. We can now introduce sequences $\mathbf{j}_0,\ldots, \mathbf{j}_\lambda$, and $l_0=0, \ldots, l_\lambda$ relative to the sequence $x(i)$ and squares of size $t$; that is,
\[x(l_i) \in D(\mathbf{j}_i,t)\]
\[l_{i+1} = \min\{l>l_i:x(l)\notin F(\mathbf{j}_i,t)\}.\]

The first observation is that we have a lower bound on $\lambda$ due to the difference in scales:
\[q-10t \le |x(\xi)-x(0)|\le \sum_{l=0}^{\lambda-1}|x(l_{i+1})-x(l_i)|+|x(\xi)-x(l_\lambda)|\le 2\sqrt{2}t(\lambda +1),\]
implying
\[\lambda \geq \C_{31} \log q.\]
The second observation is that
\[|\mathbf{j}_{i+1}(k)-\mathbf{j}_i(k)|\le 2, \  k=1,2,\]
where $\mathbf{j}_i(k)$ denotes the $k$-th coordinate of the vector $\mathbf{j}$.
From this, for each fixed $\lambda$, given $\mathbf{j}_i$, there are at most 16 choices for $\mathbf{j}_{i+1}$ and so at most
\[4\left(\frac{q}{t}+11\right)16^\lambda\]
choices for the sequence $\mathbf{j}_0, \ldots, \mathbf{j}_\lambda$. The first factor is an estimate for the number of choices of squares $D(\mathbf{j},t)$ with $x(0)\in D(\mathbf{j},t)$.  

The third observation is that $\bar{r}$ must contain, between $x(\mathbf{j}_i)$ and $x(\mathbf{j}_{i+1})$, a ``short'' crossing $r_i$ of a $t\times 3t$ or $3t\times t$ rectangle $R_i$ (that is, the crossing is between the long sides). 

Denote by $\tilde{R}_i$ the $2t\times 5t$ or $5t\times 2t$ rectangle around $R_i$, as in (\ref{eq:pre-peierls-bound}). Then 
\[\tilde{R}_i \subset S(v,3q/2)\setminus S(v,q/2),\]
and so
\begin{multline*}\hat{Z}(v,q/2,\bar r) = \{x\in S(v,3q/2): x\xrightarrow{p_c} \bar r \text{ in } S(v,3q/2)\setminus S(v,q/2)\}\\
\ge \max_{0\le i \le \lambda} Z(\tilde{R}_i,r_i),
\end{multline*}
where $Z(\tilde{R}_i,r_i)$ is the number of points in $\tilde{R}_i$ connected to $r_i$ by a $p_c$-open path in $\tilde{R}_i$. It follows that
\begin{multline}
\mathbb{P}\left(\begin{gathered} \exists p_{2m}(1)\text{-open crossing of } F(\mathbf{j},q)\setminus D(\mathbf{j},q) \\ \text{ with } \hat{Z}(v,q/2,r) \le q^{2}\pi(q)/(\log q)^4\end{gathered}\right)\\
\le \sum_{\tilde{R}_0, \ldots, \tilde{R}_\lambda}\mathbb{P}\left(\begin{gathered} \text{for all } i \le \lambda, \exists p_{2m}(1)\text{-open crossing } r_i \text{ in } R_i\\ \text{ with } Z(\tilde{R}_i,r_i)\le q^{2}\pi(q)/(\log q)^4\end{gathered}\right). \label{eq: peierls-sum}
\end{multline}
The sum is over all possible finite sequences of squares $\{\tilde{R}_i\}_{i\le \lambda}$, for all $\lambda \ge \C_{31}\log q$. This quantity is controlled by choosing a subsequence of $\C_{32}\lambda$ disjoint $\tilde{R}_i$: each rectangle intersects a fixed number of other such rectangles. The events appearing in the last probability are independent for disjoint $\tilde{R}_i$'s. Their probability can be bounded using (\ref{eq:pre-peierls-bound}) (with $\delta=1$), since our choice of $t$ implies
\[\frac{q^{2}}{(\log q)^4}\pi(q) \leq \C_{33}\frac{t^2}{\log t}\pi(t),\]
for large $q$.
Moreover, one can use the bound on the number of sequences of $\mathbf{j}$'s (there are at most 4 choices of $R_i$ for a given $\mathbf{j}_i$) to control the entire sum: the last line in (\ref{eq: peierls-sum}) is bounded up to a constant factor by:
\[\sum_{\lambda \ge \C_{31}\log q}q64^{\lambda}(\C_{33}(\log t)^{-\C_{34}})^{\C_{32}\lambda}.\]
For $q$ large enough, this sum is bounded (again up to a constant) by:
\[\exp(-\C_{35}\log q \cdot \log \log t) \ll q^{-c}\]
for any $c>0$.

On $H_{2m}(1)$, any crossing $r$ in the portion of the IPC $\Gamma(n)$ consists of $p_{2m}(1)$-open edges. Since any site $p_c$-connected to a site in the IPC also belongs to the IPC, we find that the probability in (\ref{eq-crossing-bound}) is bounded by:
\begin{multline}
\mathbb{P}(H_{2m}(1)^c) + \mathbb{P}\left(\begin{gathered} \exists \text{ a } p_{2m}(1) \text{-open crossing of } F(\mathbf{j},q)\setminus D(\mathbf{j},q) \\ \text{ with } \hat{Z}(v,q/2,r) \le q^{2}\pi(q)/(\log q)^4\end{gathered}\right)\\ \lesssim (2m)^{-M\C_0}
+ \exp(-\C_{35}\log q(\log\log q -\log\log\log q)).
\end{multline}
Choosing $M$ appropriately in the definition of $p_n(1)$ (depending on the parameter $c$ in (\ref{eq-crossing-bound})) establishes the claim.

\appendix
\section{Quenched subdiffusivity on the Incipient Infinite Cluster}
In this section, we justify Remark~3 above and outline the derivation of a result analogous to Theorem \ref{kesten-ipc-1} for the random walk on H. Kesten's \emph{Incipient Infinite Cluster} (IIC). For cylinder events $A$, the IIC measure is defined by
\begin{equation} \label{eq:iic-def} \mathbb{P}_{\mathrm{IIC}}(A) =\lim_{l\rightarrow \infty} \mathbb{P}_{p_c}(A\mid \mathbf{0} \rightarrow \partial S(l)).\end{equation}
It was shown in \cite{kesten2} that the limit (\ref{eq:iic-def}) exists and that the resulting set function extends to a measure. Note that the connected cluster of the origin,  $C(\mathbf{0})$, is $\mathbb{P}_{\mathrm{IIC}}$-almost surely unbounded. We will refer to this cluster as the IIC. We have the following result:
\begin{theorem}[Quenched Kesten theorem for the IIC]
Let $\{X_k\}_{k\ge 0}$ denote a simple random walk on the incipient infinite cluster started at $\mathbf{0}$. Let $\tau(n)$ denote the first exit time of $X_k$ from $S(n)$. There exists $\epsilon > 0$ such that, for $\mathbb{P}_{\mathrm{IIC}}$-almost every $\omega$ and almost-every realization of $\{X_k\}$, there is a (random) $n_0$  such that
\[\tau(n)\ge n^{2+\epsilon}\]
for $n$ greater than $n_0$.
\end{theorem}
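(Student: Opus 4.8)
The plan is to follow the proof of Theorem~\ref{kesten-ipc-1} step by step, the one change being that the super-linear lower bound on the chemical distance is extracted \emph{directly} from the Aizenman--Burchard theorem at $p=p_{c}$, bypassing Pisztora's near-critical extension entirely: since the IIC cluster is made of $p_{c}$-open edges, no finite-size scaling length $L(p)$ ever enters, and Hypothesis~H2 holds at \emph{every} scale. Concretely, restrict the walk to $\Lambda(n)=S(n)\cap C(\mathbf{0})$, let $X^{n}$ be the reflected chain (so $\tau^{*}(n):=\inf\{k:|X^{n}(k)|_{\infty}=n\}$ has the law of $\tau(n)$ for each fixed $\omega$), and recall $1/4\le\mu(x)/\mu(y)\le4$ for its reversing measure. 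The entire probabilistic content is the claim that, $\mathbb{P}_{\mathrm{IIC}}$-almost surely, there are a \emph{random} $C(\omega)>0$ and a deterministic $s>1$ with
\[
\operatorname{dist}_{\Lambda(n)}(\mathbf{0},\partial S(n))\ \ge\ C(\omega)\,n^{s}\qquad\text{for all }n.
\]
Granting this, on the corresponding full-measure set of $\omega$ the Carne--Varopoulos bound gives, exactly as in Section~\ref{section-backbone} but with $C(\omega)$ in the role of $\C_{7}$,
\[
\mathbf{P}^{\omega}\bigl(\tau^{*}(n)\le n^{2s}/\lambda^{2}\bigr)\ \le\ \sum_{k\le n^{2s}/\lambda^{2}}\mathbf{P}^{\omega}\bigl(\operatorname{dist}_{\Lambda(n)}(\mathbf{0},X^{n}(k))\ge C(\omega)n^{s}\bigr)\ \lesssim_{\omega}\ n^{2(s+1)}\exp\bigl(-C(\omega)^{2}\lambda^{2}/2\bigr),
\]
so that with $\lambda=\lambda(\omega,n)$ a suitably large $\omega$-dependent multiple of $(\log n)^{1/2}$ the right side is summable in $n$; one application of Borel--Cantelli, now only over the randomness of the walk, gives $\tau(n)=\tau^{*}(n)\ge n^{2s}/(\log n)^{\C}\gg n^{2+\epsilon}$, $\mathbf{P}^{\omega}$-a.s., for any $0<\epsilon<2s-2$. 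Unlike for the IPC in Theorem~\ref{main-theorem}, no volume-comparison argument is needed to obtain a positive $\epsilon$: the super-linear intrinsic distance alone suffices (Kesten's comparison could be appended to enlarge $\epsilon$, but this is not pursued here).

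To prove the displayed bound I would apply the Aizenman--Burchard machinery recalled in the proof of Lemma~\ref{pisztoraslemma}, with $\ell=1/n$, but to the family of rescaled \emph{IIC} measures on curves in $[-1,1]^{2}$ in place of rescaled critical percolation. The one thing to check is that Hypothesis~H2 holds for this family uniformly in $n$: the probability that the IIC cluster traverses each of $k$ well-separated rectangles of a fixed cross-section is $\le K\rho^{k}$ for some $\rho<1$ — this follows from Russo--Seymour--Welsh estimates and quasi-multiplicativity for the IIC (at most one of the $k$ rectangles can contain the origin, and the crossings of the remaining well-separated ones are suppressed in the usual multiplicative fashion). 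Given H2, Aizenman--Burchard's Theorem~1.3 is a genuinely multi-scale statement — its exceptional events at the $j$-th dyadic scale require a \emph{growing} number ($\sim j$) of nested rectangles to be simultaneously crossed, hence have probability summable over $j$ — so it yields $\mathbb{P}_{\mathrm{IIC}}$-a.s.\ a single random constant $C(\omega)>0$ such that every open path of Euclidean diameter $d$ contains at least $C(\omega)\,d^{s}$ bonds, for all scales $d$ at once. Applying this to a shortest open path from $\mathbf{0}$ to $\partial S(n)$ (which has diameter $\ge n$) gives the claim. Note that this step is an \emph{almost-sure} output of Aizenman--Burchard; it does not go through a Borel--Cantelli summation over $n$ — which is precisely the route that is unavailable at $p_{c}$, since there $\mathbb{P}_{p_{c}}(\operatorname{dist}(\mathbf{0},\partial S(n))\le\C n^{s})$ is bounded only by a fixed, non-vanishing constant rather than by something summable.

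I expect the only real work to be in verifying Hypothesis~H2 for the IIC: upgrading the usual RSW/quasi-multiplicativity estimates to the IIC measure, which is not a product measure, with constants uniform across scales. This is routine given the connectivity estimates of \cite{kesten2}, and it is the sole place where the argument uses anything about the IIC beyond what holds for ordinary critical percolation. Everything else — the Carne--Varopoulos input, the bound on $\mu(x)/\mu(y)$, and the single Borel--Cantelli step — is taken verbatim from the proof of Theorem~\ref{kesten-ipc-1}, with Lemma~\ref{pisztoraslemma} replaced by the correlation-length-free Aizenman--Burchard input just described; this is the sense in which the IIC argument is ``similar, but simpler.''
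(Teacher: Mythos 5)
There is a genuine gap at the heart of your argument: the claim that Aizenman--Burchard's Theorem~1.3 is ``an almost-sure output'' producing a single random constant $C(\omega)>0$ such that every open path of diameter $d$ contains at least $C(\omega)d^{s}$ bonds \emph{at all scales $d$ simultaneously}. That theorem is a statement about one measure $\mathbf{P}_\ell$ at a time: for each $\ell$ it bounds, with probability close to $1$ \emph{uniformly in} $\ell$, the capacities of all macroscopic curves in the single compact window $[-1,1]^2$ at discretization $\ell$. The multi-scale (tree) structure of its proof lives inside that one window; it does not convert into summability over the infinitely many scales $n$ (i.e.\ $\ell=1/n$) that you need for a fixed infinite-volume sample of the IIC. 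For a fixed $u$ all you get is $\mathbb{P}\left(\operatorname{dist}(\mathbf{0},\partial S(n))\le u\,n^{s}\right)\le f(u)$ with $f(u)$ small but \emph{constant in $n$}, exactly as the paper records in \eqref{eq:straight-runs}, and a constant bound per scale does not yield an almost-sure statement over all $n$ without further input. So the displayed chemical-distance bound, which carries the entire probabilistic content of your proof, is not established.

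The premise you use to justify bypassing Borel--Cantelli --- that at $p_c$ the failure probability per scale is ``bounded only by a fixed, non-vanishing constant rather than by something summable'' --- is also incorrect, and the paper's proof goes precisely through the route you declare unavailable. The fixed-$\epsilon$ Aizenman--Burchard bound is upgraded to a summable one by the coarse-graining step of Lemma~\ref{pisztoraslemma}, which works at $p=p_c$ as well: tile the annulus with blocks of side $n/(M\log n)$, call a block good if crossings of it have length at least $C(\epsilon)\left(n/(M\log n)\right)^{s}$ (probability at least $1-\epsilon$ per block, uniformly in the block size, by AB at $p_c$), note that any crossing of the annulus traverses order $M\log n$ blocks, and apply a Peierls/large-deviation bound to get failure probability $\lesssim n^{-2}$; absorbing the logarithmic loss into a slightly smaller $s$ gives \eqref{eq: pcbound}. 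The remaining issue --- that $\mathbb{P}_{\mathrm{IIC}}$ is a singular conditioning of $\mathbb{P}_{p_c}$, which is why you propose re-verifying Hypothesis H2 for the IIC itself --- is handled in the paper much more cheaply: the distance estimate is formulated for the annulus crossing $\operatorname{dist}(\partial S(2m),\partial S(m)\cup\partial S(3m))$ (with the walk analyzed between $\tau(2m)$ and $\sigma^{+}(m)$, as around \eqref{eq: W1bound}), an event depending only on edges in $S(3m)\setminus S(m)^{\circ}$, so that independence and quasi-multiplicativity transfer the unconditioned bound \eqref{eq: pcbound} to $\mathbb{P}_{\mathrm{IIC}}$ at the cost of a constant factor. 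Two applications of Borel--Cantelli (one over $n$ for the environment, one for the walk) then finish the proof exactly as in Theorem~\ref{kesten-ipc-1}. Your Carne--Varopoulos step is fine; it is the input to it that is missing.
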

We can proceed along the lines of the proof of estimate (\ref{eq: W1bound}), and consider a suitable modification of the random walk whose distribution coincides with that of $X$ from the first hitting time $\tau(2m)$ of $\partial S(2m)$ to the first hitting time of $\partial S(m) \cup \partial S(3m)$ after time $\tau(2m)$, $\sigma^+(m)$. To use the argument leading to (\ref{eq: W1bound}) in our case, we merely need to show that we can prove an estimate equivalent to the one obtained for $\mathbb{P}_{\mathrm{IPC}}(E_1(n)^c)$ in Section~\ref{section-comparison}.

We will show that there are constants $C>0$ and $s>1$ such that
\begin{equation}\label{eq: piicbound}
\mathbb{P}_{\mathrm{IIC}}(\{\operatorname{dist}_{C(\mathbf{0})}(\partial S(2m),\partial S(n)\cup \partial S(m)) \le C n^{s}\})\lesssim n^{-2},
\end{equation}
By the argument given in the proof of Lemma \ref{pisztoraslemma}, there exists $C>0$ and $s>1$ such that
\begin{equation} \label{eq: pcbound} \mathbb{P}_{p_c}\left(\begin{gathered} \exists \text{ an open path in } S(3m)\setminus S(m)^\circ \text{ connecting } \partial S(2m)\\ \text{ to } \partial S(m) \text{ or } \partial S(3m) \text{ with less than } Cn^s \text{ edges}\end{gathered}\right)\lesssim n^{-2}.\end{equation}
Let us denote the event on the left by $G(n)$. Clearly
\[\{\operatorname{dist}_{C(\mathbf{0})}(\partial S(2m),\partial S(n)\cup \partial S(m)) \le C n^{s}\} \subset G(n).\]
$G(n)$ depends only on the status of edges inside $S(3m)\setminus S(m)^\circ$.
Write the conditional probability in the definition of $\mathbb{P}_{\mathrm{IIC}}$ as a ratio:
\begin{align*}
\mathbb{P}_{p_c}(G(n)\mid \mathbf{0} \rightarrow \partial S(l)) &= \frac{\mathbb{P}_{p_c}(G(n), \mathbf{0} \rightarrow \partial S(l))}{\mathbb{P}_{p_c}(\mathbf{0} \rightarrow \partial S(l))}.
\end{align*}
For $l>3m$, we have, by independence and monotonicity
\begin{equation}\label{eq: iic3prod}
\mathbb{P}_{p_c}(G(n), \mathbf{0} \rightarrow \partial S(l)) \le \mathbb{P}_{p_c}(G(n))\mathbb{P}_{p_c}(\mathbf{0}\rightarrow \partial S(m))\mathbb{P}_{p_c}(\partial S(3m) \rightarrow \partial S(l)).
\end{equation}
Now
\[\mathbb{P}_{p_c}(\mathbf{0}\rightarrow \partial S(m)) \asymp  \mathbb{P}_{p_c}(\mathbf{0}\rightarrow \partial S(3m)),\]
and by quasi-multiplicativity
\[\mathbb{P}_{p_c}(\mathbf{0}\rightarrow \partial S(3m))\cdot\mathbb{P}_{p_c}(\partial S(3m) \rightarrow \partial S(l)) \asymp   \mathbb{P}_{p_c}(\mathbf{0} \rightarrow \partial S(l)).\]
Using this in (\ref{eq: iic3prod}), we have, by (\ref{eq: pcbound}):
\[\mathbb{P}_{p_c}(G(n)\mid \mathbf{0} \rightarrow \partial S(l)) \lesssim n^{-2},\]
from which (\ref{eq: piicbound}) follows at once.

\bigskip
\noindent
{\bf Acknowledgements.} We thank T. Kumagai for suggesting the problem of proving a quenched analogue of Kesten's subdiffusivity theorem and for comments on a previous verion. We are very grateful to A. Fribergh for comments that led to a substantial reorganization of the presentation. J. H. and P. S. thank M. Aizenman for advising and thank the organizers of the workshop ``Current topics in Mathematical Physics'' at the Erwin Schr\"odinger Institute, where some of this work was done.

\


\begin{thebibliography}{9}
\bibitem[AB]{aizburch} Aizenman, M., Burchard, A., \emph{H\"older regularity and dimension bounds for random curves}, Duke Math. J., \textbf{99}, 3, 1999.

\bibitem[AGHS]{AGHS} Angel, O., Goodman, J., den Hollander, F., Slade, G., \emph{Invasion percolation on regular trees}, Ann. Probab., \textbf{36}, 2008.

\bibitem[BJKS]{BJKS} Barlow, M., J\'arai, A. A., Kumagai, T., Slade, G., \emph{Random walk on the incipient infinite cluster for oriented percolation in high dimensions}, Comm. Math. Phys., \textbf{278}, 2008.

\bibitem[BK]{BK} Barlow, M., Kumagai, T., \emph{Random walk on the incipient infinite cluster on trees}, Illinois J. Math., \textbf{50}, 2006.

\bibitem[BN]{BN11} Beffara, V., Nolin, P., \emph{On monochromatic arm exponents for 2D critical percolation}, Ann. Probab., \textbf{39}, 4, 2011.

\bibitem[vdBK]{vdbkesten} van den Berg, J., Kesten, H., \emph{Inequalities with applications to percolation theory and reliability}, J. Appl. Prob. \textbf{22}, 1985.

\bibitem[C]{carne} Carne, Th. K., \emph{A transmutation formula for Markov chains}, Bull. Sci. Math. \textbf{2} 109, 1985.

\bibitem[CCN]{2chayesnewman} Chayes, J.T., Chayes, L., Newman, C. \emph{The stochastic geometry of invasion percolation.}, Comm. Math. Phys., \textbf{101}, 1985.

\bibitem[CK]{CK} Croydon, D., Kumagai, T. \emph{Random walks on Galton-Watson trees with infinite variance offspring distribution conditioned to survive}, Electron. J. Probab., \textbf{13}, 2010.

\bibitem[D]{diestel} Diestel, R., \emph{Graph Theory}, Fourth Edition, Graduate Texts in Mathematics 173, Springer, 2010.

\bibitem[DSV]{damronetal} Damron, M., Sapozhnikov, A., V\'av\"olgyi, B., \emph{Relations between invasion percolation and critical percolation in two dimensions}, Ann. Prob., \textbf{37}, 2009.

\bibitem[GPS]{garbanpeteschramm} Garban, C., Pete, G., Schramm, O. \emph{The Fourier spectrum of critical percolation}, Acta Mathematica, \textbf{1}, 2010.

\bibitem[K1]{kesten} Kesten, H., \emph{Subdiffusive behavior of random walk on a random cluster}, Ann. Inst. H. Poincare, \textbf{22}, 4, 1986.

\bibitem[K2]{kesten2} Kesten, H., \emph{The incipient infinite cluster in Two-Dimensional Percolation}, Probab. Th. Rel. Fields, \textbf{73}, 1986. 

\bibitem[K3]{kesten3} Kesten, H., \emph{Scaling relations in 2D percolation}, Comm. Math. Phys., \textbf{109}, 1, 1987.

\bibitem[K4]{kesten4} Kesten, H. \emph{The critical probability of bond percolation on the square lattice equals } $1/2$, Comm. Math. Phys., \textbf{74}, 1, 1980.

\bibitem[K5]{kestenbook} Kesten, H. \emph{Percolation theory for mathematicians}, Birkh\"auser, 1982.

\bibitem[KN]{KN} Kozma, G., Nachmias. A. \emph{The Alexander-Orbach conjecture holds in high dimensions}, Invent. Math., \textbf{178}, 2009.

\bibitem[J]{jarai} J\'arai, A., \emph{Invasion percolation and the incipient infinite cluster in 2D.}, Comm. Math. Phys. \textbf{236}, 2003.

\bibitem[Ng]{nguyen} Nguyen, B. G., \emph{Typical cluster size for 2-dim percolation processes (Revised)}, IMA Preprint Series 324, 1987. 

\bibitem[No]{nolin} Nolin, P., \emph{Near-critical percolation in two dimensions}, Electron. J. Probab. \textbf{13}, 2008.

\bibitem[LP]{pereslyons} Lyons, R., Peres, Y., \emph{Probability on Trees and Networks}, \url{http://mypage.iu.edu/~rdlyons/pbtree/pbtree.html}. Accessed February 2012.

\bibitem[Pisz]{pisztora} Pisztora, A., \emph{Scaling inequalities for shortest paths in regular and invasion percolation}, Carnegie-Mellon CNA preprint, available at \url{http://www.math.cmu.edu/CNA/Publications/publications2000/001abs/00-CNA-001.pdf}

\bibitem[R]{reimer} Reimer, D., \emph{Proof of the van den Berg-Kesten conjecture.} Combin. Probab. Comput. \textbf{9}, 27-32.

\bibitem[S12]{S12} Shiraishi, D., \emph{Random walk on non-intersecting two-sided random walk trace is subdiffusive in low dimensions}, To appear in Trans. Amer. Math. Soc.

\bibitem[V]{varopoulos} Varopoulos, N., \emph{Long range estimates for Markov chains.} Bull. Sci. Math. \textbf{2}, 109, 1985.

\bibitem[W]{werner} Werner, W. \emph{Lectures on two-dimensional critical percolation}, IAS-Park City Summer School notes, July 2009.

\bibitem[Z]{zhang} Zhang, Y. \emph{The fractal volume of the two-dimensional invasion percolation cluster.} Comm. Math. Phys., \textbf{167}, (1995), 237-254.
\end{thebibliography}
\end{document}